\DeclareSymbolFont{largesymbols}{OMX}{cmex}{m}{n}
\def\Ddots{\mathinner{\mkern1mu\raise\p@
\vbox{\kern7\p@\hbox{.}}\mkern2mu
\raise4\p@\hbox{.}\mkern2mu\raise7\p@\hbox{.}\mkern1mu}}
\newtheorem*{thA}{Theorem A}
\newtheorem*{thC}{Theorem C}
\def\XXint#1#2#3{{\setbox0=\hbox{$#1{#2#3}{\int}$}
\vcenter{\hbox{$#2#3$}}\kern-.5\wd0}}
\begin{document}

\newtheorem{definition}{Definition}
\newtheorem{theorem}[definition]{Theorem}
\newtheorem{proposition}[definition]{Proposition}
\newtheorem{conjecture}[definition]{Conjecture}
\def\theconjecture{\unskip}
\newtheorem{corollary}[definition]{Corollary}
\newtheorem{lemma}[definition]{Lemma}
\newtheorem{claim}[definition]{Claim}
\newtheorem{sublemma}[definition]{Sublemma}
\newtheorem{observation}[definition]{Observation}
\theoremstyle{definition}

\newtheorem{notation}[definition]{Notation}
\newtheorem{remark}[definition]{Remark}
\newtheorem{question}[definition]{Question}

\newtheorem{example}[definition]{Example}
\newtheorem{problem}[definition]{Problem}
\newtheorem{exercise}[definition]{Exercise}
 \newtheorem{thm}{Theorem}
 \newtheorem{cor}[thm]{Corollary}
 \newtheorem{lem}{Lemma}[section]
 \newtheorem{prop}[thm]{Proposition}
 \theoremstyle{definition}
 \newtheorem{dfn}[thm]{Definition}
 \theoremstyle{remark}
 \newtheorem{rem}{Remark}
 \newtheorem{ex}{Example}
 \numberwithin{equation}{section}
%%%%%%%%%%%%% Abbreviation of symbols %%%%%%%%%%%%%%%%
\def\C{\mathbb{C}}
\def\R{\mathbb{R}}
\def\Rn{{\mathbb{R}^n}}
\def\Rns{{\mathbb{R}^{n+1}}}
\def\Sn{{{S}^{n-1}}}
\def\sn{{\mathbb{S}^{n-1}}}
\def\M{\mathbb{M}}
\def\N{\mathbb{N}}
\def\Q{{\mathbb{Q}}}
\def\Z{\mathbb{Z}}
\def\X{\mathbb{X}}
\def\Y{\mathbb{Y}}
\def\F{\mathcal{F}}
\def\L{\mathcal{L}}
\def\S{\mathcal{S}}
\def\supp{\operatorname{supp}}
\def\essi{\operatornamewithlimits{ess\,inf}}
\def\esss{\operatornamewithlimits{ess\,sup}}
%%%%%%%%%%%%%%%%%%%%%%%%%%%%%%%%%%%%%%%%%%%%%%%%%%%%%

\numberwithin{equation}{section}
\numberwithin{thm}{section}
\numberwithin{definition}{section}
%\numberwithin{theorem}{section}
\numberwithin{equation}{section}

\def\earrow{{\mathbf e}}
\def\rarrow{{\mathbf r}}
\def\uarrow{{\mathbf u}}
\def\varrow{{\mathbf V}}
\def\tpar{T_{\rm par}}
\def\apar{A_{\rm par}}

\def\reals{{\mathbb R}}
\def\torus{{\mathbb T}}
\def\t{{\mathcal T}}
\def\heis{{\mathbb H}}
\def\integers{{\mathbb Z}}
\def\z{{\mathbb Z}}
\def\naturals{{\mathbb N}}
\def\complex{{\mathbb C}\/}
\def\distance{\operatorname{distance}\,}
\def\support{\operatorname{support}\,}
\def\dist{\operatorname{dist}\,}
\def\Span{\operatorname{span}\,}
\def\degree{\operatorname{degree}\,}
\def\kernel{\operatorname{kernel}\,}
\def\dim{\operatorname{dim}\,}
\def\codim{\operatorname{codim}}
\def\trace{\operatorname{trace\,}}
\def\Span{\operatorname{span}\,}
\def\dimension{\operatorname{dimension}\,}
\def\codimension{\operatorname{codimension}\,}
\def\nullspace{\scriptk}
\def\kernel{\operatorname{Ker}}
\def\ZZ{ {\mathbb Z} }
\def\p{\partial}
\def\rp{{ ^{-1} }}
\def\Re{\operatorname{Re\,} }
\def\Im{\operatorname{Im\,} }
\def\ov{\overline}
\def\eps{\varepsilon}
\def\lt{L^2}
\def\diver{\operatorname{div}}
\def\curl{\operatorname{curl}}
\def\etta{\eta}
\newcommand{\norm}[1]{ \|  #1 \|}
\def\expect{\mathbb E}
\def\bull{$\bullet$\ }

\def\blue{\color{blue}}
\def\red{\color{red}}

\def\xone{x_1}
\def\xtwo{x_2}
\def\xq{x_2+x_1^2}
\newcommand{\abr}[1]{ \langle  #1 \rangle}

\newcommand{\Norm}[1]{ \left\|  #1 \right\| }
\newcommand{\set}[1]{ \left\{ #1 \right\} }
\newcommand{\ifou}{\raisebox{-1ex}{$\check{}$}}
\def\one{\mathbf 1}
\def\whole{\mathbf V}
\newcommand{\modulo}[2]{[#1]_{#2}}
\def \essinf{\mathop{\rm essinf}}
\def\scriptf{{\mathcal F}}
\def\scriptg{{\mathcal G}}
\def\m{{\mathcal M}}
\def\scriptb{{\mathcal B}}
\def\scriptc{{\mathcal C}}
\def\scriptt{{\mathcal T}}
\def\scripti{{\mathcal I}}
\def\scripte{{\mathcal E}}
\def\V{{\mathcal V}}
\def\scriptw{{\mathcal W}}
\def\scriptu{{\mathcal U}}
\def\scriptS{{\mathcal S}}
\def\scripta{{\mathcal A}}
\def\scriptr{{\mathcal R}}
\def\scripto{{\mathcal O}}
\def\scripth{{\mathcal H}}
\def\scriptd{{\mathcal D}}
\def\scriptl{{\mathcal L}}
\def\scriptn{{\mathcal N}}
\def\scriptp{{\mathcal P}}
\def\scriptk{{\mathcal K}}
\def\frakv{{\mathfrak V}}
\def\v{{\mathcal V}}
\def\C{\mathbb{C}}
\def\D{\mathcal{D}}
\def\R{\mathbb{R}}
\def\Rn{{\mathbb{R}^n}}
\def\rn{{\mathbb{R}^n}}
\def\Rm{{\mathbb{R}^{2n}}}
\def\r2n{{\mathbb{R}^{2n}}}
\def\Sn{{{S}^{n-1}}}
\def\bbM{\mathbb{M}}
\def\N{\mathbb{N}}
\def\Q{{\mathcal{Q}}}
\def\Z{\mathbb{Z}}
\def\F{\mathcal{F}}
\def\L{\mathcal{L}}
\def\G{\mathscr{G}}
\def\ch{\operatorname{ch}}
\def\supp{\operatorname{supp}}
\def\dist{\operatorname{dist}}
\def\essi{\operatornamewithlimits{ess\,inf}}
\def\esss{\operatornamewithlimits{ess\,sup}}
\def\dis{\displaystyle}
\def\dsum{\displaystyle\sum}
\def\dint{\displaystyle\int}
\def\dfrac{\displaystyle\frac}
\def\dsup{\displaystyle\sup}
\def\dlim{\displaystyle\lim}
\def\bom{\Omega}
\def\om{\omega}
\begin{comment}
\def\scriptx{{\mathcal X}}
\def\scriptj{{\mathcal J}}
\def\scriptr{{\mathcal R}}
\def\scriptS{{\mathcal S}}
\def\scripta{{\mathcal A}}
\def\scriptk{{\mathcal K}}
\def\scriptp{{\mathcal P}}
\def\frakg{{\mathfrak g}}
\def\frakG{{\mathfrak G}}
\def\boldn{\mathbf N}
\end{comment}

\author[J. Tan]{Jiawei Tan}
\address{Jiawei Tan:
School of Mathematical Sciences \\
Beijing Normal University \\
Laboratory of Mathematics and Complex Systems \\
Ministry of Education \\
Beijing 100875 \\
People's Republic of China}
\email{jwtan@mail.bnu.edu.cn}

\author[Q. Xue]{Qingying Xue$^{*}$}
\address{Qingying Xue:
	School of Mathematical Sciences \\
	Beijing Normal University \\
	Laboratory of Mathematics and Complex Systems \\
	Ministry of Education \\
	Beijing 100875 \\
	People's Republic of China}
\email{qyxue@bnu.edu.cn}

\keywords{rough singular integral operator, composite operator, rearrangement invariant Banach function spaces
, bilinear sparse operators.\\
\indent{{\it {2010 Mathematics Subject Classification.}}} Primary 42B20,
Secondary 42B35.}

\thanks{The authors were partly supported by the National Key R\&D Program of China (No. 2020YFA0712900) and NNSF of China (No. 12271041).
\thanks{$^{*}$ Corresponding author, e-mail address: qyxue@bnu.edu.cn}}

\date{\today}
\title[ COMPOSITION OF ROUGH SINGULAR INTEGRAL OPERATORS  ]
{\bf Composition of rough singular integral operators on rearrangement invariant Banach type spaces}

\begin{abstract}
Let $\Omega$ be a homogeneous function of degree zero and enjoy the vanishing condition on the unit sphere $\mathbb{S}^{n-1}(n\geq 2)$. Let $T_{\Omega}$ be the convolution singular integral operator with kernel ${\Omega(x)}{|x|^{-n}}$. In this paper, when $\Omega \in L^{\infty}(\mathbb {S}^{n-1})$,  we consider the quantitative weighted bounds of the composite operators of $T_{\Omega}$ on rearrangement invariant Banach function spaces. These spaces contain the classical Lorentz spaces and Orlicz spaces as special examples.  Weighted boundedness of the composite operators on rearrangement invariant quasi-Banach spaces were also given.
\end{abstract}\maketitle

\section{Introduction and main results}
This paper aims to establish the quantitative weighted boundedness for the composition of rough singular integral operators in rearrangement invariant Banach spaces and quasi-Banach spaces.
It is worthy to pointing out that the classical Lorentz spaces, Orlicz spaces and  the Marcinkiewicz spaces are special examples of these spaces. The study of these rearrangement invariant function spaces has a long history. Indeed, in 1955, Lorentz \cite{lor} first showed that the Hardy-Littlewood maximal operator $M$ is bounded on rearrangement invariant Banach function space $\mathbb{X}$ if and only if $p_{\mathbb{X}}>1$. Subsequently, Boyd \cite{boy} proved that the Hilbert transform $H$ is also bounded on $\mathbb{X}$ if and only if $1<p_{\mathbb{X}} \leq q_{\mathbb{X}}<\infty .$  Here $p_{\mathbb{X}}$ and $q_{\mathbb{X}}$ denote the Boyd indices of $\mathbb{X}$ (see Section 2.1 below). These results were originally proved for Banach spaces, but they were later generalized to the quasi-Banach case with the same restrictions on Boyd's index  in \cite{mon}. Since then many other contributions came to enrich the literature on this subject, we refer the readers to \cite{ben, edm, cur} and the references therein. In particular, by using sparse domination, Anderson and Hu \cite{and} obtained the quantitative weighted bounds for the maximal truncated  singular integral operator on rearrangement invariant Banach spaces.

We now give a brief review  on  the study of rough singular integrals. Let $\Omega$ be a homogeneous function of degree zero, $ \Omega \in L^{1}(\mathbb{S}^{n-1})
$
and satisfy the vanishing condition on the unit sphere $\mathbb{S}^{n-1}(n\geq 2)$ as follows
\begin{equation}\label{e.1}
\int_{\mathbb{S}^{n-1}} \Omega(y) d \sigma(y)=0,
\end{equation}
where $d \sigma(y)$ denotes the Lebesgue measure with restrictions on $\mathbb {S}^{n-1}$.
 In 1956, Calder\'{o}n and Zygmund  \cite{cal2} introduced  the following rough homogeneous singular integral operator
\begin{equation}
T_{\Omega} f(x)=\text {p.v.} \int_{\mathbb{R}^{n}} \frac{\Omega\left(y/|y|\right)}{|y|^{n}} f(x-y) d y.
\end{equation}
Using the method of rotation, Calder\'{o}n and Zygmund \cite{cal2} demonstrated the $L^p$ $(1<p<\infty)$ boundedness of $T_{\Omega}$
whenever $\Omega $ is odd and
$\Omega \in L^1(\mathbb{S}^{n-1})$, or $\Omega$ is even, $\Omega\in L\log L(\mathbb{S}^{n-1})$ and satisfies (\ref {e.1}).
In 1979, Connett \cite{C1979}, Ricci and Weiss \cite{ric} independently showed that \(\Omega\in H^1(\sn)\) is sufficient to warrant the $L^p$ boundedness of $T_{\Omega}$. Here \(H^1(\sn)\) denotes the Hardy space on \(\sn\) which contains \(L\log L(\sn)\) as a proper subspace.

Conside the weak endpoint case and the weighted case. This area has flourished and has been enriched by  several important works.
Among them are the celebrated works of Christ,  Christ and Rubio~de Francia, Hofmann, Seeger, and Tao for weak type $(1,1)$ bounds of $T_{\Omega}$.  In particular,  in 1996, Seeger \cite{see} proved that $T_{\Omega}$ is bounded from $L^{1}(\mathbb{R}^{n})$ to $L^{1, \infty}(\mathbb{R}^{n})$ with a sufficient condition $\Omega \in L \log L\left(\sn\right).$
For the weighted cases, it was Duoandikoetxea and Rubio de Francia \cite{duo} who obtained the $L^{p}(\mathbb{R}, w(x) d x)$-boundedness of $T_{\Omega}$ for $1<p<\infty$ provided that $\Omega \in L^{\infty}(\mathbb{S}^{n-1})$ and $w $ is a Muckenhoupt $A_{p}$ weight. This result was later improved in \cite{duo1} and \cite{wat}. It is worth mentioning that, in 2017, Hyt\"{o}nen et al. \cite{hyt1} obtained the quantitative weighted boundedness of $T_{\Omega}.$
For other works related to $T_{\Omega}$, we refer the readers to see \cite{chr, ric, lik,GS1999} and the references therein.

In general,  there are two distinct approaches in the study of singular integral operators. Consider it as a principal value operator of convolution type or as an operator of Fourier multipliers defined by
$$
\widehat{T_{m} f}(\xi)=m(\xi) \hat{f}(\xi),
$$
where $m \in L^{\infty}\left(\mathbb{R}^{n}\right)$ and $\hat{f}$ denotes the Fourier transform of $f$.
Let $\Omega \in L\log L(\mathbb{S}^{n-1})$ be homogeneous of degree zero and satisfy the vanishing condition (\ref{e.1}). Then the following identity transformation relationship holds between $m$ and $\Omega$
$$
m(\xi)=\int_{\mathbb{S}^{n-1}} \Omega\left(y^{\prime}\right)\left(\log \frac{1}{\left|\xi \cdot y^{\prime}\right|}-\frac{i \pi}{2} \operatorname{sgn}\left(\xi \cdot y^{\prime}\right)\right) d \sigma(y).
$$
 However, unfortunately, this identity does not provide an exact correspondence between various auxiliary conditions assumed on $m$ and $\Omega$.\par
One of the fundamental questions in operator theory is what the composition of two operators is. This question is of great importance and attracts lots of attention. Indeed, it was known that the composition of singular integral operators arises typically in the study of algebra of singular integral (see \cite{cal0, cal3}) and the non-coercive boundary-value problems for elliptic equations (see \cite{nag,pho}).
The answer to this question for $T_{\Omega}$ is easily obtained by using the Fourier multiplier representation, which states that
the composition of two singular integral operators is an operator of the same form and  the multiplier of the composition is the product of the two multipliers (see \cite{str}). This answer is so elegant and useful that it actually forms the basis for the calculus of pseudo-differential operators. {\textbf {What if it is in the form of a principal value integral?}}   Coifman and Meyer \cite{coi} considered the composition of classical Calder\'{o}n-Zygmund operators and pointed out that
if $T_{1}, T_{2}$ are two Calder\'{o}n-Zygmund operators, $T_{2}^{*}$ be the adjoint operator of $T_{2}$ and $T_{1}(1)=T_{2}^{*}(1)=0$, then the composite operator $T_{1} T_{2}$ is also a Calder\'{o}n-Zygmund operator. It then follows that the composition of Calder\'{o}n-Zygmund operators is still strong $(p,p)$ type and weak $(1,1)$ type. In 2018, Benea and Bernicot \cite{bene} used the sparse domination method to reduce the above conditions to $T_1(1) = 0$ or $T^*_2 (1) = 0,$ and the weighted boundedness of the composite operator for the Calder\'{o}n-Zygmund operators can also be obtained. In addition, previous result for Hardy-Littlewood maximal operators $M$ were obtained by Carozza and Passarelli di Napoli \cite{car}. They showed that the following weak type endpoint estimates hold for the composition of  $M$,
$$
\left|\left\{x \in \mathbb{R}^{n}: M^{k} f(x)>\lambda\right\}\right| \lesssim \int_{\mathbb{R}^{n}} \Psi_{k-1}\left(\frac{|f(x)|}{\lambda}\right) d x, \quad 0\leq \beta <\infty,
$$
where $M^k$ is the $k$-th iterations of $M$ and  $\Psi_{\beta}(t)=t \log ^{\beta}(\mathrm{e}+t).$

By sparse domination, the results in \cite{bene} imply the conclusion that if $T_{1}, T_{2}$ be two Calder\'{o}n-Zygmund operators with $T_{1}(1)=0$, then for any $1<p< \infty, 1<q< p$, and $w \in A_{p / q}\left(\mathbb{R}^{n}\right)$,
$$
\left\|T_{1} T_{2} f\right\|_{L^{p}\left(\mathbb{R}^{n}, w\right)} \lesssim[w]_{A_{p / q}}^{\max \left\{\frac{1}{p-q}, 1\right\}}\|f\|_{L^{p}\left(\mathbb{R}^{n}, w\right)},
$$
where the precise definitions of $A_{p}\left(\mathbb{R}^{n}\right)$ weight and $A_{p}$ constants are listed in Section \ref {Sect 2}.
It was Hu \cite{hu3} who proved the weighted bound for the composite operator $T_1T_2$ without the
assumption $T_1(1) = 0.$ Recently, Hu \cite{hu1} established the weighted weak type endpoint estimate for $T_1T_2.$ Still more recently, using the method of sparse domination, more accurate weighted estimate for the composition of rough homogeneous singular integral operators were obtained by Hu, Lai and Xue \cite{hu2}.
\begin{thA}[\cite{hu2}]
 Let $\Omega_{1}, \Omega_{2}$ be homogeneous of degree zero, have mean value zero and $\Omega_{1}, \Omega_{2} \in L^{\infty}\left(\mathbb{S}^{n-1}\right)$. Then for $p \in(1, \infty)$ and $w \in A_{p}\left(\mathbb{R}^{n}\right)$,
$$
\begin{aligned}
\left\|T_{\Omega_{1}} T_{\Omega_{2}} f\right\|_{L^{p}\left(\mathbb{R}^{n}, w\right)} \lesssim & {[w]_{A_{p}}^{\frac{1}{p}}\left([w]_{A_{\infty}}^{\frac{1}{p^{\prime}}}+[\sigma]_{A_{\infty}}^{\frac{1}{p}}\right)\left(
[\sigma]_{A_{\infty}}+[w]_{A_{\infty}}\right) } \\
& \times \min \left\{[\sigma]_{A_{\infty}},[w]_{A_{\infty}}\right\}\|f\|_{L^{p}\left(\mathbb{R}^{n}, w\right)},
\end{aligned}
$$
where $p^{\prime}=p /(p-1), \sigma=w^{-1 /(p-1)}.$
\end{thA}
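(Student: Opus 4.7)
The plan is the now-standard two-step sparse-domination strategy, adapted to the composition of two rough kernels.

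\textbf{Step 1 (bilinear sparse domination).} The crucial first step is to produce, for each pair $f, g$ of nice functions, a $\tfrac12$-sparse family $\mathcal{S}$ of dyadic cubes such that
\begin{equation*}
|\langle T_{\Omega_1} T_{\Omega_2} f, g\rangle| \lesssim \|\Omega_1\|_{L^\infty(\sn)} \|\Omega_2\|_{L^\infty(\sn)} \sum_{Q\in\mathcal{S}} |Q|\, \|f\|_{L\log L, Q}\, \|g\|_{L\log L, Q}.
\end{equation*}
Compared with the Conde-Alonso--Culiuc--Di Plinio--Ou / Lerner sparse bound for a single $T_\Omega$ (one $L\log L$ bump and one $L^1$ bump), the extra $L\log L$ bump encodes the second rough kernel contributed by the composition. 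The derivation should follow the same template: decompose each $T_{\Omega_i}$ into Littlewood--Paley pieces $T_{\Omega_i, j}$ at dyadic annular frequency scales via Duoandikoetxea--Rubio de Francia, obtain uniform single-scale bounds for the composite pieces $T_{\Omega_1, j} T_{\Omega_2, k}$ using the vanishing condition \eqref{e.1} for both $\Omega_1$ and $\Omega_2$, and iterate a Calder\'on--Zygmund stopping-time construction on both $f$ and $g$ to grow $\mathcal{S}$.

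\textbf{Step 2 (weighted estimate).} With the sparse bound in hand, the weighted estimate follows from the standard Hyt\"onen--P\'erez--Lerner machinery. Setting $\sigma = w^{1-p'}$ and dualizing reduces the task to bounding the bilinear form
\begin{equation*}
\sum_{Q \in \mathcal{S}} |Q|\, \|f\|_{L\log L, Q}\, \|g\|_{L\log L, Q}
\end{equation*}
by $\|f\|_{L^p(w)}\|g\|_{L^{p'}(\sigma)}$ with the stated constant. P\'erez's $A_\infty$ inequality is applied to trade each of the two Orlicz bumps for an ordinary weighted average at the cost of one $A_\infty$ factor; since either $w$ or $\sigma$ may be invoked in each such trade, optimising the choice cube-by-cube produces the collective factor $([w]_{A_\infty}+[\sigma]_{A_\infty})\min\{[w]_{A_\infty},[\sigma]_{A_\infty}\}$. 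The residual sparse sum with ordinary weighted averages is then controlled by $[w]_{A_p}^{1/p}\bigl([w]_{A_\infty}^{1/p'}+[\sigma]_{A_\infty}^{1/p}\bigr)$ via the principal-cube/Carleson embedding approach used in the Hyt\"onen--Roncal--Tapiola analysis of $T_\Omega$.

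\textbf{Main obstacle.} The entire technical burden lies in Step 1. For a single rough operator, each Littlewood--Paley piece is frequency-localised in a thin annulus and Plancherel yields a single-scale $L^2$ bound of order $\|\Omega\|_{L^\infty}$, from which almost-orthogonality decay summable in the scale parameter follows routinely. For the composition $T_{\Omega_1, j}T_{\Omega_2, k}$, however, the kernel inherits the oscillation of \emph{both} rough factors in an entangled manner, so the double-scale almost-orthogonality is substantially more delicate. One needs a quantitative single-scale estimate yielding summable decay in $|j-k|$, strong enough to close the stopping-time iteration with the composite $L\log L$--$L\log L$ averages and---crucially---symmetric between the roles of $f$ and $g$, so that the final weighted bound can feature $\min\{[w]_{A_\infty},[\sigma]_{A_\infty}\}$ rather than a cruder $\max$.
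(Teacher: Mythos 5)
The paper does not give a self-contained proof of Theorem A (it is quoted from \cite{hu2}), but it does state, as Lemma \ref{lem1.2}, the bilinear sparse domination that \cite{hu2} actually proves, and it proves an abstract analogue (Lemma \ref{lem2.1}) for the proof of Theorem \ref{thm1.2}. Comparing with these, your Step 1 claims a symmetric sparse bound
\begin{equation*}
|\langle T_{\Omega_1}T_{\Omega_2}f,g\rangle|\lesssim\sum_{Q\in\mathcal S}|Q|\,\|f\|_{L\log L,Q}\,\|g\|_{L\log L,Q},
\end{equation*}
but this is not the sparse bound that is established, and it is not the one that the Lerner-type stopping-time machinery produces. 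The actual domination is a pointwise splitting $T_{\Omega_1}T_{\Omega_2}f=J_1+J_2$ with
\begin{equation*}
|\langle J_1,g\rangle|\lesssim r'\,\mathcal A_{\mathcal S;L\log L,L^r}(f,g),\qquad
|\langle J_2,g\rangle|\lesssim (r')^2\,\mathcal A_{\mathcal S;L^1,L^r}(f,g),\qquad r\in(1,3/2],
\end{equation*}
which is intrinsically asymmetric: the $g$-side always carries an $L^r$ bump because the stopping-time construction uses $L^{r'}$-boundedness of $T_1$ on the dual side (through the grand maximal truncated operators $\mathscr M_{T,r'}$), while the $f$-side inherits an $L\log L$ bump from the weak-type $L\log L$ endpoint of the composition. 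A symmetric $L\log L$--$L\log L$ form does not arise from this argument, and you would need to explain why it should hold at all; as written, Step 1 is claiming something different from (and not obviously implied by) what is known.

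The $r$-dependence that you drop is not cosmetic: it is precisely where the factor $[\sigma]_{A_\infty}+[w]_{A_\infty}$ (and one power inside the $\min$) comes from. In the weighted step, $r$ is chosen so that $r'\simeq[w]_{A_\infty}$ or $r'\simeq[\sigma]_{A_\infty}$ (via the sharp reverse H\"older inequality, Lemma \ref{lem1.1}); the constants $r'$, $(r')^2$ in front of the two sparse forms then feed directly into the final bound, and the leading factor $[w]_{A_p}^{1/p}([w]_{A_\infty}^{1/p'}+[\sigma]_{A_\infty}^{1/p})$ comes from the standard Hyt\"onen--P\'erez estimate for the plain sparse operator, not from any ``trading'' of Orlicz bumps. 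Your description of ``optimising the choice cube-by-cube'' does not match this mechanism and would not reproduce the stated constant. The $\min\{[w]_{A_\infty},[\sigma]_{A_\infty}\}$ likewise does not come from symmetry of the sparse form; it comes from the freedom to run the whole (asymmetric) argument either for $T_{\Omega_1}T_{\Omega_2}$ or for its adjoint $T_{\widetilde\Omega_2}T_{\widetilde\Omega_1}$, exchanging the roles of $w$ and $\sigma$, and then taking the better of the two. Finally, your ``main obstacle'' paragraph points at a double Littlewood--Paley almost-orthogonality analysis of $T_{\Omega_1,j}T_{\Omega_2,k}$, but this is not the route \cite{hu2} (or this paper's Lemma \ref{lem2.1}) takes; the sparse domination is obtained abstractly from three inputs --- $L^{r'}$-boundedness of $T_1$, the weak-type $L\log L$ endpoint for $T_1T_2$ and for $\mathscr M_{T_1,r'}T_2$, and weak $(1,1)$-type control of $\mathscr M_{T_2,r'}$ --- fed into a Calder\'on--Zygmund/stopping-time scheme in the spirit of Lerner, rather than by re-doing the kernel-side Fourier analysis at the level of the composition.
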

As we mentioned in the beginning of this paper, the main purpose of this paper is to obtain the boundedness of the composition for rough singular integral operators $T_{\Omega_{1}} T_{\Omega_{2}}$ on rearrangement invariant Banach spaces (RIBFS in the sequel) and quasi-Banach spaces (RIQBFS in the sequel) whenever both $\Omega_{1}$ and $\Omega_{2}$ belong to $L^{\infty}\left(\mathbb{S}^{n-1}\right)$.

We summary our first  main result as follows.

\begin{theorem}\label{thm1.1}
Let $\Omega_{1}, \Omega_{2}$ be homogeneous of degree zero, have the vanishing moment (\ref{e.1}) and $\Omega_{1}, \Omega_{2} \in L^{\infty}\left(\mathbb{S}^{n-1}\right)$. Let $1<r<\infty$ and $\X$ be a RIBFS with $1 <p_{\X}\leq q_{\X}< \infty$, then there exist $q,q_0 >1$ such that
\begin{equation*}\left\|T_{\Omega_1}T_{\Omega_2}f\right\|_{\X(w)}\lesssim\left\{\begin{array}{ll}
[w]_{A_\infty}^2  [w]_{A_{p_\X/r}}^{\frac{2}{rq}} \left\| f \right\|_{\X(w)}, &\text{ if } r<p_\X\leq q_\X, w\in A_{\frac{p_\X}{r}}; \\
{[w]_{A_\infty}^2}[w]_{A_{p_\X}}^{\frac{1}{p_\X}}\left([w]_{A_\infty}+[w]_{A_{p_\X}}^{\frac{1}{p_\X}}\right)
\left\|f\right\|_{\X(w)},& \text{ if } 1<p_\X<q_0, w\in A_{p_\X}.
\end{array}\right.
\end{equation*}
\end{theorem}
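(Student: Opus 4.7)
The plan is to derive Theorem~\ref{thm1.1} from a bilinear sparse form domination for the composite operator, together with a quantitative Rubio de Francia extrapolation on rearrangement invariant Banach function spaces. The starting point is a sparse representation inherited from the arguments behind Theorem~A of Hu-Lai-Xue \cite{hu2}: for each $r>1$ sufficiently close to $1$, to every pair $(f,g)$ of bounded, compactly supported functions one can associate a sparse collection $\mathcal{S}$ of dyadic cubes such that
$$|\langle T_{\Omega_1}T_{\Omega_2}f,g\rangle|\lesssim \|\Omega_1\|_{L^\infty(\sn)}\|\Omega_2\|_{L^\infty(\sn)}\sum_{Q\in\mathcal{S}}|Q|\,\langle f\rangle_{Q,r}\langle g\rangle_{Q,r}.$$
This reduces matters to the study of the bilinear sparse form on the right.

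The second step is to obtain quantitative weighted $L^p$ estimates for the associated sparse operator. Using the $A_p$--$A_\infty$ technique of Hyt\"onen, Lerner and P\'erez, for $p>r$ and $w\in A_{p/r}$ one extracts a bound of the schematic form
$$\bigl\|\mathcal{A}_{\mathcal{S},r}f\bigr\|_{L^p(w)}\lesssim \Phi_{p,r}\bigl([w]_{A_{p/r}},[w]_{A_\infty},[\sigma]_{A_\infty}\bigr)\|f\|_{L^p(w)},$$
where the roughness of each of $\Omega_1,\Omega_2$ contributes one $[w]_{A_\infty}$ factor to $\Phi_{p,r}$, producing the common prefactor $[w]_{A_\infty}^2$ in both cases of the theorem. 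For the regime $p_\X$ close to $1$, the sparse form with $L^r$ averages on the $f$-side is no longer usable; one uses instead a dual sparse form with $L^1$ averages on $f$ and $L^{r'}$ averages on $g$, yielding an $L^p(w)$ estimate for every $w\in A_{p_\X}$ but with the less favorable functional $[w]_{A_{p_\X}}^{1/p_\X}\bigl([w]_{A_\infty}+[w]_{A_{p_\X}}^{1/p_\X}\bigr)$.

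The third step is to promote these weighted $L^p$ estimates to the full RIBFS scale. The Rubio de Francia-type extrapolation in rearrangement invariant Banach function spaces (in the quantitative form developed by Curbera-Garc\'ia-Cuerva-Martell-P\'erez and exploited by Anderson-Hu \cite{and}) says that if $\|Tf\|_{L^{p_0}(w)}\lesssim \Psi([w]_{A_{p_0}},[w]_{A_\infty})\|f\|_{L^{p_0}(w)}$ for some $p_0$ in a subinterval of $(1,\infty)$ compatible with the Boyd indices of $\X$, then $\|Tf\|_{\X(w)}\lesssim \Psi'([w]_{\cdot},[w]_{A_\infty})\|f\|_{\X(w)}$, with $\Psi'$ expressible through the same exponents evaluated at parameters driven by $p_\X$ and $q_\X$. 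Combining this with the two sparse-form estimates above, and specializing to a starting exponent $p_0$ chosen within $(\max\{1,r\},q_0)$ in the first case and slightly above $1$ in the second, produces the two branches of Theorem~\ref{thm1.1}.

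The principal obstacle lies not in any single step but in \emph{threading} the weight constants correctly through the chain: sparse domination, $A_p$--$A_\infty$ bound, extrapolation. One has to verify that the sparse-form exponents (in particular the choice of $r$) are compatible with the Boyd-index restrictions of the target space, and that the $q_0$ appearing in the second case is precisely the upper limit for which the initial $L^{p_0}(w)$ bound with the $A_{p_0}$ characteristic at the stated power is valid. A secondary difficulty is ensuring that the two cases overlap cleanly on $p_\X\in(r,q_0)$, so that the theorem covers every RIBFS with $1<p_\X\le q_\X<\infty$ without gap.
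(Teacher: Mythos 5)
Your outline correctly identifies sparse domination as the engine, but the route you propose---sparse form $\to$ quantitative $L^p(w)$ bound $\to$ Rubio de Francia extrapolation on RIBFS---is not what the paper does, and the extrapolation step contains a genuine gap. The quantitative extrapolation theorem you invoke does not exist in the form you need: the Curbera--Garc\'ia-Cuerva--Martell--P\'erez extrapolation on rearrangement invariant spaces is essentially qualitative, and Anderson--Hu \cite{and} do \emph{not} proceed by extrapolation. What \cite{and} actually supplies (and what the paper uses as its Lemma~\ref{lem1.3}) is a direct quantitative bound $\|M\|_{\X(w)\to\X(w)}\lesssim[w]_{A_{p_\X}}^{1/p_\X}$. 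The paper then works entirely on $\X(w)$ by duality, $\|T_{\Omega_1}T_{\Omega_2}f\|_{\X(w)}=\sup_{\|g\|_{\X'(w)}\le1}|\int T_{\Omega_1}T_{\Omega_2}f\,g\,w|$, inserts the sparse bound, passes from Lebesgue averages $\langle|gw|\rangle_{r,Q}$ to weighted $L^{2s}$-averages $g_{Q,w}^{2s}$ via a reverse-H\"older inequality with a carefully tuned pair $(r,s)$ obeying $(r-\tfrac1s)s'<1+\tfrac1{2^{11+n}[w]_{A_\infty}}$, then uses Carleson embedding and the maximal bounds for $M^2$, $M_r$, $M_{w,2s}^{\mathcal D}$. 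The restriction $q_\X<2-\tfrac1{1+p_\X}$ is exactly what makes $p_{\X'^{1/2s}}>1$, allowing the dyadic weighted maximal operator to be bounded on $\X'^{1/2s}(w)$. Your proposal neither fixes $s$ nor identifies where $q_0$ comes from, so the mechanism producing the second branch is not actually present.

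A secondary mismatch: you start from a symmetric sparse form with $L^r$ averages on \emph{both} $f$ and $g$, but Lemma~\ref{lem1.2} (from \cite{hu2}) gives an asymmetric decomposition $T_{\Omega_1}T_{\Omega_2}f=J_1+J_2$ with $\mathcal A_{\mathcal S;L\log L,L^r}$ controlling $J_1$ and $\mathcal A_{\mathcal S;L^1,L^r}$ controlling $J_2$. The $L\log L$ average on $f$ in the $J_1$-piece is precisely what produces $M^2$ and hence the exponent $\tfrac2{p_\X}$ on $[w]_{A_{p_\X}}$ in the small-$p_\X$ branch; replacing it by $L^1$ on $f$ and $L^{r'}$ on $g$, as you suggest, loses that structure. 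For the $r<p_\X$ branch, the paper does not in fact invoke the two-operator sparse form at all: it uses the single-operator $(L^r,L^1)$ sparse bound from \cite{con}/\cite{ler1} to estimate $\|T_{\Omega_1}\|_{\X(w)\to\X(w)}\lesssim[w]_{A_\infty}[w]_{A_{p_\X/r}}^{1/(rq)}$ and then simply \emph{iterates} this bound for the composition; your plan does not mention this iteration. Finally, the two branches do \emph{not} ``overlap cleanly'' to cover all RIBFS with $w\in A_{p_\X}$: when $1<p_\X$ but $q_\X\ge q_0$, the first branch only gives $w\in A_{p_\X/r}$ for some $r<p_\X$, which is strictly smaller than $A_{p_\X}$, and the second branch does not apply. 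The theorem is stated with exactly that gap; claiming coverage ``without gap'' is not accurate.
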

\begin{remark}
If we set $\X=L^p$ with $1<p<\infty$, then $p_{\X}= q_{\X}=p$ and the result in Theorem \ref{thm1.1} covers the conclusion in Theorem A as a special case. Furthermore, to the best knowledge of the author, even the quantitative weighted estimates for a single rough singular integral operator $T_\Omega$ on $\X$ is new.
\end{remark}
As a consequence of Theorem \ref{thm1.1}, it follows that:
\begin{corollary}\label{cor1.1}
Let $\Omega_{1}, \Omega_{2}$ be homogeneous of degree zero, have mean value zero and $\Omega_{1}, \Omega_{2} \in L^{\infty}\left(\mathbb{S}^{n-1}\right)$. Let $1<r<\infty$ and $\X$ be a RIQBFS, which is $p$-convex for some $p > 0$. If $p <p_{\X}\leq q_{\X}< \infty$, then there exist $q,q_0 >1$ such that
\begin{equation*}\left\||T_{\Omega_1}T_{\Omega_2}f|^{\frac{1}{p}}\right\|_{\X(w)}\lesssim\left\{\begin{array}{ll}
[w]_{A_\infty}^\frac{2}{p}  [w]_{A_{\frac{p_\X}{pr}}}^{\frac{2}{prq}} \left\| |f|^{\frac{1}{p}} \right\|_{\X(w)}, & pr<p_\X, w\in A_{\frac{p_\X}{pr}}; \\
{[w]_{A_\infty}^\frac{2}{p}}[w]_{A_{\frac{p_\X}{p}}}^{\frac{1}{p_\X}}\left([w]^{\frac{1}{p}}_{A_\infty}+[w]_{A_{\frac{p_\X}{p}}}^
{\frac{1}{p_\X}}\right)
\left\||f|^{\frac{1}{p}}\right\|_{\X(w)},& p_\X<pq_0, w\in A_{\frac{p_\X}{p}}.
\end{array}\right.
\end{equation*}
\end{corollary}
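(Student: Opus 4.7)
My plan is to derive Corollary \ref{cor1.1} from Theorem \ref{thm1.1} by passing to a $p$-convexification of $\X$. First, I would introduce the auxiliary rearrangement invariant space $\Y(w)$ defined by the prescription
$$\|F\|_{\Y(w)} := \left\||F|^{1/p}\right\|_{\X(w)}^{p}.$$
Because $\X$ is $p$-convex, $\Y(w)$ is a genuine RIBFS (up to an equivalent norm): writing $f_i = |F_i|^{1/p}$, the pointwise bound $|F_1+F_2|^{1/p} \le (f_1^p+f_2^p)^{1/p}$ combined with the $p$-convexity estimate
$$\|(f_1^p+f_2^p)^{1/p}\|_{\X(w)} \le C\bigl(\|f_1\|_{\X(w)}^{p}+\|f_2\|_{\X(w)}^{p}\bigr)^{1/p}$$
gives the triangle inequality in $\Y(w)$ with constant $C^{p}$. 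Rearrangement invariance passes from $\X$ to $\Y$ because $F$ and $|F|^{1/p}$ have monotonically related distribution functions.

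Second, I would verify how the Boyd indices transform. Writing $D_s$ for the dilation operator and $h_{\X}(s)$ for its operator norm on $\X$, the identity $D_s(|F|^{1/p}) = |D_sF|^{1/p}$ produces $h_{\Y}(s) = h_{\X}(s)^{p}$, so that
$$p_{\Y} = p_{\X}/p, \qquad q_{\Y} = q_{\X}/p.$$
The hypothesis $p < p_{\X} \le q_{\X} < \infty$ thus becomes $1 < p_{\Y} \le q_{\Y} < \infty$, the sub-conditions $pr < p_{\X}$ and $p_{\X} < pq_{0}$ translate to $r < p_{\Y}$ and $p_{\Y} < q_{0}$ respectively, and the weight class identities $A_{p_{\X}/(pr)} = A_{p_{\Y}/r}$, $A_{p_{\X}/p} = A_{p_{\Y}}$ hold tautologically. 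Hence $\Y$ satisfies every hypothesis of Theorem \ref{thm1.1}.

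Applying Theorem \ref{thm1.1} to $\Y(w)$ then yields
$$\|T_{\Omega_{1}}T_{\Omega_{2}}f\|_{\Y(w)} \lesssim K(w)\,\|f\|_{\Y(w)},$$
where $K(w)$ is the constant produced by Theorem \ref{thm1.1} with $p_{\Y}$ substituted for $p_{\X}$. Unpacking the definition of $\Y(w)$ and taking $p$-th roots converts this into
$$\left\||T_{\Omega_{1}}T_{\Omega_{2}}f|^{1/p}\right\|_{\X(w)} \lesssim K(w)^{1/p} \left\||f|^{1/p}\right\|_{\X(w)}.$$
Substituting $p_{\Y} = p_{\X}/p$ in the explicit form of $K(w)^{1/p}$, and using the elementary equivalence $(a+b)^{1/p} \sim a^{1/p}+b^{1/p}$ to split the additive factor appearing in the second branch, reproduces precisely the two inequalities stated in Corollary \ref{cor1.1}.

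The principal obstacle is purely a bookkeeping one: namely, the careful verification that the $p$-convexification $\Y(w)$ is itself a weighted RIBFS and that its Boyd indices scale as $p_\Y = p_\X/p$, $q_\Y = q_\X/p$. These are essentially standard facts about $p$-convexification of Banach lattices, and once they are in hand the corollary follows as a formal rewriting of Theorem \ref{thm1.1}.
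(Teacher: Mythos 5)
Your proposal is correct and follows essentially the same route as the paper: introduce the $p$-convexified space $\Y(w)=\X^{1/p}(w)$ (your $\Y$ is exactly the paper's $\X^{1/p}$), observe it is an RIBFS with $p_\Y=p_\X/p$ and $q_\Y=q_\X/p$, apply Theorem \ref{thm1.1} to $\Y$, and translate back by taking $p$-th roots, using $(a+b)^{1/p}\simeq a^{1/p}+b^{1/p}$ to split the additive factor in the second branch. The only small point glossed over on both sides is the compatibility $\X^{r}(w)=\X(w)^{r}$, which the paper cites from \cite{cur}; with that in hand the argument is complete.
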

It is well known that some important facts may be significantly different between Banach spaces and quasi-Banach spaces. For example, the direction of the H\"{o}lder's inequality in $L^p (0<p<1)$ and $L^q (q\geq 1)$  is opposite. When $\X$ is a space of rearrangement invariant quasi-Banach type, we obtain the following quantitative weighted bounds for the composition of rough singular integral operators.
\begin{theorem}\label{thm1.2}
Let $\Omega_{1}, \Omega_{2}$ be homogeneous of degree zero, have mean value zero and $\Omega_{1}, \Omega_{2} \in L^{\infty}\left(\mathbb{S}^{n-1}\right)$.  Let $\X$ be a RIQBFS, which is p-convex with $0<p \leq 1$. If $ 1<p_{\X}\leq q_{\X}< 2p-\frac{1}{1+p_{\X}} p$, then for every $w \in A_{{p_{\X}}}$,
\begin{equation*}
\left\|T_{\Omega_1}T_{\Omega_2}f\right\|_{\X(w)}\lesssim \left([w]_{A_\infty}^{1+\frac{1}{p}}+[w]_{A_\infty}^{2+\frac{1}{p}}\right)  \left([w]_{A_{p_\X}}^{\frac{1}{p_\X}}+[w]_{A_{p_\X}}^{\frac{2}{p_\X }}\right)\left\|f \right\|_{\X(w)}.
\end{equation*}
\end{theorem}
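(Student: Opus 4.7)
The plan is to deduce Theorem \ref{thm1.2} by combining a bilinear sparse domination for the composite operator $T_{\Omega_1}T_{\Omega_2}$ with a Rubio de Francia extrapolation scheme adapted to p-convex rearrangement invariant quasi-Banach function spaces.

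First I would invoke the bilinear sparse bound implicit in the proof of Theorem A of \cite{hu2}: for each pair of test functions $(f,g)$ there is a sparse family $\mathcal{S}$ so that $|\langle T_{\Omega_1}T_{\Omega_2}f, g\rangle|$ is controlled by a bilinear sparse form in $(f,g)$ involving $L^{1}$-averages of $f$ and $g$. By the standard sparse-to-weighted principle this yields, for every $s\in(1,\infty)$ and $w\in A_s$, a quantitative $L^{s}(w)$-bound whose $A_s$ and $A_\infty$ exponents already match the shape of the target estimate, with two distinct $A_\infty$ factors reflecting the two compositions.

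Next I would transfer this Lebesgue estimate to $\X(w)$. Since $\X$ is p-convex with $0<p\leq 1$, its p-convexification $\X^{1/p}$ is a Banach function space whose Boyd indices are $p_\X/p$ and $q_\X/p$; the hypothesis $q_\X < 2p - \frac{p}{1+p_\X}$ is precisely what keeps these rescaled indices strictly within the Boyd boundedness range for the Hardy--Littlewood maximal operator $M$ on $\X^{1/p}(w^{1/p})$. Building on this, I would use the Rubio de Francia iteration algorithm based on $M$ to linearize the sparse form and transfer the weighted $L^{s}$-bound into a weighted $\X$-bound. The exponents $1+\tfrac{1}{p}$ and $2+\tfrac{1}{p}$ on $[w]_{A_\infty}$, as well as $\tfrac{1}{p_\X}$ and $\tfrac{2}{p_\X}$ on $[w]_{A_{p_\X}}$, arise from the sparse weighted exponents being rescaled through the $1/p$ convexification factor, one rescaling for each $A_\infty$ occurrence.

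The main obstacle will be the sharp propagation of weighted constants through the quasi-Banach extrapolation. Unlike the Banach setting, duality on $\X$ itself is unavailable and the natural $\ell^p$-type inequalities reverse, so the algorithm must be organized entirely on the p-convexification side. I plan to follow a Curbera--Martell--P\'{e}rez type approach: build an $M$-iteration operator, control it quantitatively on $\X^{1/p}(w^{1/p})$ via Boyd's theorem, and use it to dominate the sparse operator against arbitrary $\X$-functions. The constraint on $q_\X$ is tight in this scheme, since any slack would push $M^{1/p}$ outside the boundedness window on $\X^{1/p}$; the term $[w]_{A_\infty}^{2+1/p}$ arises from allowing one $A_\infty$ factor to be absorbed through the iteration while the other is transferred by direct sparse-bound propagation.
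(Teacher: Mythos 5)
Your proposal correctly identifies the high-level landmarks (p-convexification to $\X^{1/p}$, Boyd-index bookkeeping, sparse domination driving the weighted bound), but there is a genuine gap at the heart of the argument. You propose to feed the \emph{existing} bilinear sparse bound for $\langle T_{\Omega_1}T_{\Omega_2}f, g\rangle$ (Lemma~\ref{lem1.2}, from \cite{hu2}) into a Rubio de Francia--type extrapolation scheme. But in the quasi-Banach regime $0<p\le 1$ the Lorentz--Luxemburg duality is only available on the Banach space $\mathbb{Y}=\X^{1/p}$, which means the quantity one must control is $\int_{\Rn} |T_{\Omega_1}T_{\Omega_2}f|^{p}\,g\,w\,dx$ rather than $\int_{\Rn} T_{\Omega_1}T_{\Omega_2}f\cdot g\,w\,dx$. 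A bilinear sparse bound on the pairing $\langle T_{\Omega_1}T_{\Omega_2}f, g\rangle$ does not directly dominate $\int |T_{\Omega_1}T_{\Omega_2}f|^p|g|$: the map $h\mapsto |h|^p$ destroys the linearity needed to slot into the sparse pairing. The paper fills this gap by proving a new domination result (Lemma~\ref{lem2.1}) that bounds $\int |H_i|^p|g|$ by a $p$-modified bilinear sparse form $\mathcal{A}^{(p,1)}_{\mathcal{S};L(\log L)^\beta,L^{(r'/p)'}}(f,g)$. This requires re-running a Lerner-style stopping-time decomposition from scratch and exploiting $|a+b|^p\le |a|^p+|b|^p$ at each stage; it is not a corollary of Lemma~\ref{lem1.2} and is not produced by any standard extrapolation step.

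Beyond that, the proposed extrapolation route does not transparently account for the exact constants in the statement. The bound $\left([w]_{A_\infty}^{1+\frac{1}{p}}+[w]_{A_\infty}^{2+\frac{1}{p}}\right)\left([w]_{A_{p_\X}}^{\frac{1}{p_\X}}+[w]_{A_{p_\X}}^{\frac{2}{p_\X}}\right)$ in the paper comes from applying Lemma~\ref{lem2.1} with explicit parameters ($A_0=1$, $A_1=A=A_2=r'$, $\beta_1=1$, $\beta_2=0$), a Carleson embedding argument giving one $[w]_{A_\infty}$ per principal-cube summation, and Lemma~\ref{lem1.3} on $\mathbb{Y}(w)$ and on $\mathbb{Y}'^{1/2s}(w)$ — not from Rubio de Francia iteration, which the paper does not use. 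Also, the constraint $q_\X< 2p-\frac{p}{1+p_\X}$ is not, as you suggest, what keeps $M$ bounded on $\X^{1/p}(w^{1/p})$; it is precisely what makes the lower Boyd index $p_{\mathbb{Y}'^{1/2s}}=\frac{1}{2s}\cdot\frac{q_\X}{q_\X-p}$ exceed $1$, with $s=1+\frac{1}{2p_\X}$, so that the dyadic maximal operator $M_w^{\mathcal{D}}$ is bounded on $\mathbb{Y}'^{1/2s}(w)$. Without identifying the new sparse lemma and this specific mechanism, the plan as written would not close.
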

\vspace{0.2cm}

This paper is organized as follows. In Section \ref  {Sect 2}, we present some lemmas and related definitions, such as RIBFS and RIQBFS, dyadic cubes, some maximal operators and bi-sublinear sparse operators. The proofs of Theorem \ref{thm1.1} and Corollary \ref{cor1.1} will be given in Section \ref {Sect 3}. In Section \ref {Sect 4}, we demonstrate Theorem \ref{thm1.2}.  An application of Theorem \ref{thm1.1} will be given in Section \ref {Sect 5}.
\par

In what follows, $C$ always denotes a positive constant that is independent of the main parameters involved but whose value may differ from line to line. For any $a, b \in \mathbb{R}, a \lesssim b$ $(a \gtrsim b,$ respectively) denotes that there exists a constant $C>0$ such that $a \leq C b;$ and $a \simeq b$ denotes $a \lesssim b$ and $b \lesssim a.$ $p^{\prime}$ will always denote the conjugate of $p,$ namely, $1 / p+1 / p^{\prime}=1$.

\section{Preliminary}\label {Sect 2}
First, we recall some basic properties for RIBFS, RIQBFS, sparse family and Orlicz maximal operators.
\subsection{ RIBFS and RIQBFS}
Let's start with some simple definitions. \par
$\circ$ \textbf{Basic definitions of RIBFS.} Let $\mathcal{M}$ be the set of measurable functions on $\left(\mathbb{R}^{n}, d x\right)$ and $\mathcal{M}^{+}$ be the nonnegative ones. A rearrangement invariant Banach norm is a mapping $\rho: \mathcal{M}^{+} \mapsto[0, \infty]$ such that the following properties hold:
\medskip
\begin{enumerate}[i).]
	\item $\rho(f)=0 \Leftrightarrow f=0,$ a.e.; $\rho(f+g) \leq \rho(f)+\rho(g) ; \rho(a f)=a \rho(f)$ for $a \geq 0$;
	\item If $0 \leq f \leq g,$ a.e., then $\rho(f) \leq \rho(g)$;
	\item If $f_{n} \uparrow f,$ a.e., then $\rho\left(f_{n}\right) \uparrow \rho(f)$;
	\item If $E$ is a measurable set such that $|E|<\infty,$ then $\rho\left(\chi_{E}\right)<\infty,$ and $\int_{E} f d x \leq$ $C_{E} \rho(f),$ for some constant $0<C_{E}<\infty,$ depending on $E$ and $\rho,$ but independent of $f$;
	\item $\rho(f)=\rho(g)$ if $f$ and $g$ are equimeasurable, that is, $d_{f}(\lambda)=d_{g}(\lambda), \lambda \geq 0$ where $d_{f}\left(d_{g}\right.$ respectively) denotes the distribution function of $f$ ($g$ respectively).
\end{enumerate}
\medskip
By means of $\rho,$ a rearrangement invariant Banach function space (RIBFS) $\X$ can be defined:
$$\mathbb{X}=\left\{f \in \mathcal{M}:\|f\|_{\mathbb{X}}:=\rho(|f|)<\infty\right\}.$$
Let $\mathbb{X}^{\prime}$ be the associated space of $\mathbb{X}$, which is also a Banach function space given by
$$
\mathbb{X}^{\prime}=\left\{f \in \mathcal{M}:\|f\|_{\mathbb{X}^{\prime}}:=\sup \left\{\int_{\mathbb{R}^{n}} f g d x: g \in \mathcal{M}^{+}, \rho(g) \leq 1\right\}<\infty\right\}.
$$
Note that in the present setting, $\mathbb{X}$ is a RIBFS if and only if $\mathbb{X}^{\prime}$ is a $\mathrm{RIBFS}$ (\cite[Chapter 2, Corollary 4.4]{ben}). By definition, the following generalized H\"{o}lder's inequality holds for any $f \in \mathbb{X}, g \in \mathbb{X}^{\prime}$ :
$$
\int_{\mathbb{R}^{n}}|f g| dx \leq\|f\|_{\mathbb{X}}\|g\|_{\mathbb{X}^{\prime}}.
$$
A key fact in a RIBFS $\X$ is that the Lorentz-Luxemburg theorem holds:
$$\|f\|_{\mathbb{X}}=\sup \left\{\left|\int_{\mathbb{R}^{n}} f g d x\right|: g \in \mathbb{X}^{\prime},\|g\|_{\mathbb{X}^{\prime}} \leq 1\right\}.$$\par
Recall that the decreasing rearrangement function $f^{*}$ is defined by
$$
f^{*}(t)=\inf \left\{\lambda \geq 0: d_{f}(\lambda) \leq t\right\}, t \geq 0.
$$
An important property of $f^{*}$ is that it is equimeasurable with $f$. This allows one to obtain a representation of $\X$, i.e., Luxemburg’s representation theorem (\cite[Chapter 2, Theorem 4.10]{ben}), which asserts that there exists a RIBFS $\overline{\mathbb{X}}$ over $\left(\mathbb{R}^{+}, d t\right)$ such that $f \in \mathbb{X}$ if and only if $f^{*} \in \overline{\mathbb{X}}$, and in this case $\|f\|_{\mathbb{X}}=\left\|f^{*}\right\|_{\overline{\mathbb{X}}}$. From this it can be seen that the mapping $f \mapsto f^{*}$ is an isometry.
In addition, notice that $\overline{\mathbb{X}}^{\prime}=\overline{\mathbb{X}^{\prime}}$ and $\|f\|_{\mathbb{X}^{\prime}}=\left\|f^{*}\right\|_{\overline{\mathbb{X}}^{\prime}}$ hold for the associated space.\par

\medskip
$\circ$ \textbf{Weighted versions of RIBFS $\X$.}
Before we consider weighted versions of RIBFS $\X$, we need to recall the definitions of Muckenhoupt weights. Let $w$ be a non-negative locally integrable function defined on $\mathbb{R}^{n}.$ We say that a weight $w$ belongs to the Muckenhoupt class $A_{p}$ with $1<p<\infty,$ if
$$
[w]_{A_{p}}:=\sup _{Q} \left(\frac{1}{|Q|} \int_{Q} w(x) d x\right)\left(\frac{1}{|Q|} \int_{Q} w(x)^{1-p^{\prime}} d x\right)^{p-1} <\infty,
$$
and for $w \in A_{\infty},$
$$
[w]_{A_{\infty}}:=\sup _{Q} \frac{1}{w(Q)} \int_{Q} M\left(w \chi_{Q}\right)(x) d x,
$$
where the supremum is taken over all cubes $Q \subset \mathbb{R}^{n}.$
The $A_{1}$ constant is defined by
$$
[w]_{A_{1}}:=\sup _{x \in \mathbb{R}^{n}} \frac{M w(x)}{w(x)},
$$
where $M$ is the Hardy-Littlewood maximal operator.\par

Some important properties of weights are listed in the following lemmas.

\begin{lemma}[\cite{per}]\label{lem1.4}
	Let $1<p<\infty$ and let $w \in A_{p}.$ Then $w \in A_{p-\varepsilon}$ with
	$$
	\varepsilon:=\varepsilon (p)=\frac{p-1}{1+2^{n+1}[\sigma]_{A_{\infty}}}
	$$
	where $\sigma=w^{1-p^{\prime}}$ is the dual weight. Furthermore
	$
	[w]_{A_{p-\varepsilon}} \leq 2^{p-1}[w]_{A_{p}}.
	$
\end{lemma}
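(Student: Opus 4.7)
The plan is to reduce the openness of $A_p$ to the quantitative sharp reverse Hölder inequality for $A_\infty$ weights (due to Hytönen--Pérez), applied to the dual weight $\sigma=w^{1-p'}$. The key observation is that since $w\in A_p$, the dual weight lies in $A_{p'}\subset A_\infty$, and any $A_\infty$ weight admits a quantitative self-improvement in integrability with a gain that is explicit in $[\,\cdot\,]_{A_\infty}$.

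First I would recall the Hytönen--Pérez sharp reverse Hölder: for any $u\in A_\infty$ and any cube $Q$,
$$
\left(\frac{1}{|Q|}\int_Q u^{1+\delta_u}\,dx\right)^{1/(1+\delta_u)}\le 2\cdot \frac{1}{|Q|}\int_Q u\,dx,
\qquad \delta_u=\frac{1}{2^{n+1}[u]_{A_\infty}}.
$$
Apply this with $u=\sigma$, setting $\delta:=\delta_\sigma=\frac{1}{2^{n+1}[\sigma]_{A_\infty}}$. The next step is purely algebraic: choose $\varepsilon$ so that the exponent $1-(p-\varepsilon)'$ matches $(1+\delta)$ times the exponent $-1/(p-1)$ appearing in $\sigma$. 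Since $\sigma^{1+\delta}=w^{-(1+\delta)/(p-1)}$, the matching identity $p-\varepsilon-1=\frac{p-1}{1+\delta}$ forces
$$
\varepsilon=\frac{(p-1)\delta}{1+\delta}=\frac{p-1}{1+2^{n+1}[\sigma]_{A_\infty}},
$$
which is exactly the $\varepsilon(p)$ stated in the lemma, so $w^{1-(p-\varepsilon)'}=\sigma^{1+\delta}$.

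With this identification, the $A_{p-\varepsilon}$ testing quantity becomes
$$
\left(\frac{1}{|Q|}\int_Q w\right)\left(\frac{1}{|Q|}\int_Q \sigma^{1+\delta}\right)^{(p-1)/(1+\delta)}
\le \left(\frac{1}{|Q|}\int_Q w\right)\left(2\cdot\frac{1}{|Q|}\int_Q \sigma\right)^{p-1},
$$
where I used the sharp reverse Hölder for $\sigma$ raised to the power $p-\varepsilon-1=(p-1)/(1+\delta)$. The right-hand side is at most $2^{p-1}[w]_{A_p}$ by definition of the $A_p$ constant, which, after taking the supremum in $Q$, yields the claimed bound $[w]_{A_{p-\varepsilon}}\le 2^{p-1}[w]_{A_p}$.

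The only non-routine ingredient is the sharp reverse Hölder with the $\delta\asymp [u]_{A_\infty}^{-1}$ dependence and the explicit constant $2$ on the right-hand side; everything else is an algebraic matching of exponents plus one application of the $A_p$ condition. This is the main obstacle in the sense that weaker or non-quantitative versions of reverse Hölder would yield openness but not the precise constants $\varepsilon(p)$ and $2^{p-1}$ appearing in the statement. Provided that lemma is invoked as a black box, the proof reduces to the computation sketched above.
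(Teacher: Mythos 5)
Your argument is correct and is the standard proof of this quantitative openness statement; the paper itself cites Lemma~\ref{lem1.4} from \cite{per} without proof, and the reduction to the sharp reverse H\"older inequality for $\sigma\in A_\infty$, followed by the algebraic matching of exponents, is precisely the argument in that reference. Two small remarks worth flagging. First, the algebra is exactly right: $1-(p-\varepsilon)'=-1/(p-\varepsilon-1)$, so requiring $w^{1-(p-\varepsilon)'}=\sigma^{1+\delta}$ amounts to $p-\varepsilon-1=(p-1)/(1+\delta)$, which with $\delta=\bigl(2^{n+1}[\sigma]_{A_\infty}\bigr)^{-1}$ gives $\varepsilon=(p-1)\delta/(1+\delta)=\tfrac{p-1}{1+2^{n+1}[\sigma]_{A_\infty}}$, and then raising the reverse H\"older estimate for $\sigma$ to the power $p-\varepsilon-1$ and pairing it with $\fint_Q w$ yields $[w]_{A_{p-\varepsilon}}\le 2^{p-1}[w]_{A_p}$. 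Second, be careful about which version of the sharp reverse H\"older you invoke: the paper's own Lemma~\ref{lem1.1} records the gain $\delta\in\bigl(1,1+\tfrac{1}{2^{11+n}[w]_{A_\infty}}\bigr]$, which if used directly would produce $2^{11+n}$ rather than $2^{n+1}$ in the denominator of $\varepsilon$. Your proof implicitly requires the stronger version with gain $\tfrac{1}{2^{n+1}[\sigma]_{A_\infty}}$; this is indeed the form appearing in \cite{per} and matches the constant in the statement, but it is not the literal statement of Lemma~\ref{lem1.1} in this paper, so the precise source should be cited if the proof were to be included.
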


\begin{lemma}[\cite{hyt}]\label{lem1.1}
 Let $w \in A_{\infty}\left(\mathbb{R}^{n}\right)$. Then for any cube $Q$ and $\delta \in\left(1,1+\frac{1}{2^{11+n}[w]_{A_{\infty}}}\right]$,
$$
\left(\frac{1}{|Q|} \int_{Q} w^{\delta}(x) \mathrm{d} x\right)^{\frac{1}{\delta}} \leq \frac{2}{|Q|} \int_{Q} w(x) \mathrm{d} x.
$$
\end{lemma}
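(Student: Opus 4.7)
The plan is to prove the sharp reverse Hölder inequality by combining the Fujii–Wilson characterization of $[w]_{A_\infty}$ with a layer-cake decomposition and a dyadic Calderón–Zygmund stopping-time argument on the cube $Q$, then optimizing the resulting geometric series in the exponent $\delta$. Throughout, write $w_Q := \frac{1}{|Q|}\int_Q w$, let $M_Q^{\mathrm{d}}$ denote the dyadic maximal operator restricted to $Q$ (i.e.\ taken over dyadic subcubes of $Q$), and set $\Omega_t := \{x\in Q : M_Q^{\mathrm{d}} w(x) > t\}$.

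First I would write
\[
\int_Q w^{\delta}\,dx = w_Q^{\delta}|Q| + (\delta-1)\int_{w_Q}^{\infty} t^{\delta-2}\Big(\int_{\{w>t\}\cap Q} w\,dx\Big)dt,
\]
which follows from the identity $w^{\delta-1}=(\delta-1)\int_0^w t^{\delta-2}\,dt$ and splitting the outer integral at $t=w_Q$. To control the inner integral for $t>w_Q$, I would apply the Calderón–Zygmund decomposition of $w$ on $Q$ at height $t$: this produces a maximal family $\{Q_j^t\}$ of disjoint dyadic subcubes of $Q$ whose union is $\Omega_t$, with $t < w_{Q_j^t}\le 2^n t$. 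Consequently, since $w(x)\le t$ on $Q\setminus\Omega_t$ for a.e.\ $x$,
\[
\int_{\{w>t\}\cap Q} w\,dx \le \sum_j w_{Q_j^t}|Q_j^t|\le 2^n t\,|\Omega_t|.
\]

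The key quantitative step is bounding $|\Omega_t|$ by the Fujii–Wilson constant. I would note that $M_Q^{\mathrm{d}} w \le M(w\chi_Q)$ pointwise, so by Chebyshev and the definition $[w]_{A_\infty}=\sup_Q\frac{1}{w(Q)}\int_Q M(w\chi_Q)$,
\[
t\,|\Omega_t| \le \int_{\Omega_t} M(w\chi_Q)\,dx \le [w]_{A_\infty}\,w(Q\cap U_t),
\]
where $U_t\subset Q$ is an appropriate super-level set at height comparable to $t$. A refinement, obtained by iterating the CZ stopping argument across the dyadic scale $t=2^{nk}w_Q$, gives the exponential decay $w(\Omega_{2^{nk}w_Q})\le 2^{-k}\,w(Q)$ provided the stopping rule increments by a factor $\ge 2\cdot 2^n[w]_{A_\infty}$ at each generation. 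Substituting into the layer-cake identity and summing the resulting geometric series yields
\[
\int_Q w^{\delta}\,dx \le w_Q^{\delta}|Q|\cdot\Big(1+C\sum_{k\ge 1} 2^{nk(\delta-1)}\,2^{-k}\Big),
\]
and the series converges (with sum bounded by $1$) as soon as $2^{n(\delta-1)}\le \tfrac12\cdot \tfrac{1}{[w]_{A_\infty}}^{-1}$, which, after solving for $\delta-1$, forces the explicit threshold $\delta-1 \le 2^{-(11+n)}[w]_{A_\infty}^{-1}$ appearing in the statement. Taking $\delta$-th roots produces the factor $2$ on the right-hand side.

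The main obstacle is obtaining the correct dependence of the admissible range of $\delta$ on $[w]_{A_\infty}$: a naive application of Fujii–Wilson only gives a bound of the form $|\Omega_t|\le [w]_{A_\infty}\tfrac{w(Q)}{t}$, which by itself is not strong enough to close the geometric iteration. The careful point is that the CZ cubes at level $2^{nk}w_Q$ must be matched with the Fujii–Wilson inequality applied \emph{on the parent cubes} of the previous generation, so that the decay of $w(\Omega_{2^{nk}w_Q})$ is genuinely exponential in $k$ with constant $\tfrac12$; this is where the numerical factor $2^{11+n}$ in the admissible range of $\delta$ is produced.
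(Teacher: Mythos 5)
The paper does not prove this lemma; it is quoted verbatim from Hyt\"onen--P\'erez \cite{hyt}, so there is no in-paper argument to compare against. Evaluating your blind proof on its own terms, the general template (layer cake, dyadic Calder\'on--Zygmund stopping, Fujii--Wilson bound) is indeed the right family of ideas, but the central quantitative step does not hold as stated. The claimed decay $w(\Omega_{2^{nk}w_Q}) \le 2^{-k}w(Q)$ on the dyadic scale $t_k = 2^{nk}w_Q$ (a scale that does not involve $[w]_{A_\infty}$) cannot be correct: if true, the layer-cake sum would close with a reverse-H\"older exponent independent of $[w]_{A_\infty}$, contradicting the well-known degeneration of the exponent as $[w]_{A_\infty}\to\infty$. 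What Fujii--Wilson plus stopping actually produces is $|\Omega_{t_{k+1}}|\le \frac12|\Omega_{t_k}|$ (Lebesgue measure, not $w$-measure) on the much coarser scale $t_{k+1}=2^{n+1}[w]_{A_\infty}t_k$; and note that $w(\Omega_{t_k})$ can even increase multiplicatively from one stopping generation to the next, since $w(\Omega_{t_{k+1}})\le 2^n t_{k+1}|\Omega_{t_{k+1}}|\le 2^{2n}[w]_{A_\infty}w(\Omega_{t_k})$.

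Even after replacing the decay estimate by the correct one, the discrete geometric series you write does not close. Inserting $|\Omega_t|\le 2^{-k}|Q|$ for $t\in[t_k,t_{k+1})$ with $t_k=a^k w_Q$, $a=2^{n+1}[w]_{A_\infty}$, into $\int_{w_Q}^\infty t^{\delta-1}|\Omega_t|\,dt$ gives a term of size $(a^\delta-1)t_k^\delta/\delta$ per block, so the sum is $\sum_k 2^{-k}a^{k\delta}$, which requires $a^\delta<2$ --- impossible for any $\delta\ge 1$ since $a\ge 2^{n+1}\ge 4$. The piecewise-constant discretization of $|\Omega_t|$ is simply too lossy. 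The Hyt\"onen--P\'erez argument avoids this by working with the continuous quantity $F(\lambda)=\int_\lambda^\infty|\Omega_s|\,ds$, deriving the differential inequality $F(2^n\lambda)\le(2^n[w]_{A_\infty}-1)\lambda|\Omega_\lambda|=-(2^n[w]_{A_\infty}-1)\lambda F'(\lambda)$ directly from Fujii--Wilson applied on the stopping cubes of generation $\lambda$, then integrating (Gronwall) to get the polynomial decay $F(\lambda)\lesssim F(w_Q)(w_Q/\lambda)^{c}$ with $c\sim 2^{-n}[w]_{A_\infty}^{-1}$, and finally integrating against $\lambda^{\delta-1}$ (or $\lambda^{\delta-2}$ via integration by parts). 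This succeeds precisely because the decay exponent $c$ is of the same order as the admissible $\delta-1$. Lastly, the closing deduction --- that $2^{n(\delta-1)}\le\frac12[w]_{A_\infty}$ ``forces'' $\delta-1\le 2^{-(11+n)}[w]_{A_\infty}^{-1}$ --- is arithmetically false: the first inequality, when solvable, yields a bound on $\delta-1$ that \emph{grows} with $[w]_{A_\infty}$, and for $[w]_{A_\infty}<2$ it is vacuously impossible. So the final step as written does not produce the stated threshold.
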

\par
We now give the weighted version of the RIBFS $\X$. First, the distribution function and the decreasing rearrangement with respect to $w$ are defined by
$$w_{f}(\lambda)=w\left(\{x \in \mathbb{R}^{n}:|f(x)|>\lambda\}\right) ; \quad f_{w}^{*}(t)=\inf \left\{\lambda \geq 0: w_{f}(\lambda) \leq t\right\}.$$
In this way, the weighted version of the space $\X$ is given by
$$\mathbb{X}(w)=\left\{f \in \mathcal{M}:\|f\|_{\mathbb{X}(w)}:=\left\|f_{w}^{*}\right\|_{\overline{\mathbb{X}}}<\infty\right\}.$$
Then, the same procedure applying on the associate spaces yields $\mathbb{X}^{\prime}(w)=\mathbb{X}(w)^{\prime}$ (see \cite[p. 168]{cur}).\par

$\circ$ \textbf{ Boyd indices and $r$ exponent.} Next, we recall the Boyd indices of a RIBFS, which are closely related to some interpolation properties, see \cite[Chapter 3]{ben} for more details. We start with the dilation operator $D_{t}$ of $\X$,
$$ D_{t} f(s)=f\left(\frac{s}{t}\right), 0<t<\infty, f \in \overline{\mathbb{X}} ,$$
and its norm
$$ h_{\mathbb{X}}(t)=\left\|D_{t}\right\|_{\overline{\mathbb{X}} \mapsto \overline{\mathbb{X}}},  0<t<\infty .$$
The lower and upper Boyd indices are defined, respectively, by the following form:
$$p_{\mathbb{X}}=\lim _{t \rightarrow \infty} \frac{\log t}{\log h_{\X}(t)}=\sup _{1<t<\infty} \frac{\log t}{\log h_{\X}(t)}, \quad q_{\mathbb{X}}=\lim _{t \rightarrow 0^{+}} \frac{\log t}{\log h_{\mathbb{X}}(t)}=\inf _{0<t<1} \frac{\log t}{\log h_{\mathbb{X}}(t)}.$$
A simple calculation shows that $1 \leq p_{\mathbb{X}} \leq q_{\mathbb{X}} \leq \infty,$ which follows from the fact that $h_{\mathbb{X}}(t)$ is submultiplicative. In order to give the above definition a general explanation, we consider a special case. If $\X =L^p$ with $1<p<\infty$, then $h_{\X}(t)=t^{\frac{1}{p}}$ and thus $p_{\X}=q_{\mathbb{X}}=p.$ \par
The relationship between the Boyd indices of $\mathbb{X}$ and $\mathbb{X}^{\prime}$ are as follows : $p_{\mathbb{X}^{\prime}}=\left(q_{\mathbb{X}}\right)^{\prime}$ and $q_{\mathbb{X}^{\prime}}=\left(p_{\mathbb{X}}\right)^{\prime},$ where $p$ and
$p^{\prime}$ are conjugate exponents.(see \cite[Chapter 11, Corollary 11.6]{mal}).\par

Now, we consider the following $r$ exponent of RIBFS $\mathbb{X}$ with $0<r<\infty$:
$$\mathbb{X}^{r}=\left\{f \in \mathcal{M}:|f|^{r} \in \mathbb{X}\right\},$$
and the norm $\|f\|_{\X ^{r}}=\left\||f|^{r}\right\|_{\X}^{\frac{1}{r}}$. By the definition of Boyd indices it is easy to verify that $p_{\mathbb{X}^{r}}=p_{\mathbb{X}} \cdot r$ and $q_{\mathbb{X}^{r}}=q_{\mathbb{X}} \cdot r$. \par

$\circ$ \textbf{The case of RIQBFS.}
Analogy to $L^p$ space, for each $r \geq 1,$ $\mathbb{X}^{r}$ is still a RIBFS when $\X$ is a RIBFS. However, if $0<r<1,$ the space $\mathbb{X}^{r}$ is not necessarily a Banach space (see \cite[p. 269]{cur}). Hence, it is natural to consider the quasi-Banach case.\par
To see this, we first give the definition of the quasi-Banach function norm. We say a mapping $\rho^{\prime}: \mathcal{M}^{+} \mapsto[0, \infty)$ is a rearrangement invariant quasi-Banach function norm if $\rho^{\prime}$ satisfies the basic condition \romannumeral1),\romannumeral2),\romannumeral3) and \romannumeral5) with the triangle inequality replaced by the quasi-triangle inequality as follows:
$$\rho^{\prime}(f+g) \leq C\left(\rho^{\prime}(f)+\rho^{\prime}(g)\right),$$
where $C$ is an absolute positive constant. Similarly, a rearrangement invariant quasi-Banach function space is a collection that consist of all measurable functions which satisfies $\rho^{\prime}(|f|)<\infty$. In order to get a better study in transformation between RIBFS and RIQBFS, we need to consider the following $p$-convex property with $p>0$ on $\X$ which is a RIQBFS (see \cite[p. 3]{gra1}):
$$\left\|\left(\sum_{j=1}^{N}\left|f_{j}\right|^{p}\right)^{\frac{1}{p}}\right\| _{\mathbb{X}} \lesssim\left(\sum_{j=1}^{N}\left\|f_{j}\right\|_{\mathbb{X}}^{p}\right)^{\frac{1}{p}}.$$

A very important conclusion states that the $p$-convex property is equivalent to the fact that $\mathbb{X}^{\frac{1}{p}}$ is a RIBFS (see \cite[p. 269]{cur}). According to the above results and using Lorentz-Luxemburg’s theorem again, we have
$$\|f\|_{\mathbb{X}} \simeq \sup \left\{\left(\int_{\mathbb{R}^{n}}|f(x)|^{p} g(x) d x\right)^{\frac{1}{p}}: g \in \mathcal{M}^{+},\|g\|_{\mathbb{Y}^{\prime}} \leq 1\right\},$$
where $\mathbb{Y}^{\prime}$ is the associated space of the RIBFS $\mathbb{Y}=\mathbb{X}^{\frac{1}{p}}$.  Simultaneously, for each
$w \in A_{\infty}$ and $0<r<\infty$, one may define $\mathbb{X}(w)$ for a RIQBFS $\X$ and it enjoys that $\mathbb{X}(w)^{r}=\mathbb{X}^{r}(w)$(see \cite[p. 269]{cur}).

\begin{remark}
	It is necessary for us to make a remark about the newly added $p$-convex property. As Grafakos and Kalton (\cite{gra1}) said ``all practical spaces are $p$-convex for some $p > 0$". This is because there are only very few spaces that do not satisfy the $p$-convex property for any $p>0$ (see \cite{joh}).
	
\end{remark}
\subsection{  Young function and Orlicz maximal operators} In this subsection, we present some fundamental facts about Young functions and Orlicz local averages which will play an important role in our analysis. We refer the readers to \cite{rao} for more information.\par
Firstly, let $\Phi$ be the set of functions $\phi:[0, \infty) \longrightarrow[0, \infty)$ which are non-negative, increasing and such that $\lim _{t \rightarrow \infty} \phi(t)=\infty $  and $\lim _{t \rightarrow 0} \phi(t)=0 .$ If $\phi \in \Phi$ is convex we say that $\phi$ is a Young function.
Next, we can define the average of the Luxemburg norm, namely $\phi$-norm, of $f$ over a cube $Q$ as
$$
\|f\|_{\phi(\mu), Q}:=\inf \left\{\lambda>0: \frac{1}{\mu(Q)} \int_{Q} \phi\left(\frac{|f(x)|}{\lambda}\right) d \mu \leq 1\right\}.
$$
For the sake of notation, if $\mu$ is the Lebesgue measure, we write $\|f\|_{\phi, Q}$,  and we denote $\|f\|_{\phi(w), Q},$ if $\mu=w d x$ is an absolutely continuous measure with respect to the Lebesgue measure.\par
Each Young function $\phi$ enjoys the following generalized H\"{o}lder's inequality:
$$\frac{1}{\mu(Q)} \int_{Q}|f g| d \mu \leq 2\|f\|_{\phi(\mu), Q}\|g\|_{\bar{\phi}(\mu), Q},$$
where $\bar{\phi}(t)=\sup _{s>0}\{s t-\phi(s)\}$ is the complementary function of $\phi$.\par

Then we can naturally represent the Orlicz maximal operator $M_{\phi} f$ associated to the Young function $\phi$ with the following form:
$$
M_{\phi} f(x):=\sup _{x \in Q}\|f\|_{\phi, Q}.
$$
Finally, we present some particular examples of maximal operators related to certain Young functions.
\begin{itemize}
	\item If $\phi(t)=t^{r}$ with $r>1,$ then $M_{\phi}=M_{r}$.
	\item If we consider $\phi(t)=t \log ^{\alpha}(e+t)$ with $\alpha>0 ,$ then $\bar{\phi}(t) \simeq e^{t^{1 / \alpha}}-1$ and we denote
	$M_{\phi}=M_{L (\log L)^{\alpha}}$. We have that $M \leq M_{\phi} \lesssim M_{r}$ for all $1<r<\infty,$ moreover, it can
	be proved that $M_{\phi} \simeq M^{l+1},$ where $\alpha=l \in \mathbb{N}$ and $M^{l+1}$ is $M$ iterated $l+1$ times.
	\item $M_{\phi}= M_{L(\log L)^{\alpha}(\log \log L)^{\beta}}$ given by the function $\phi(t)=t \log^{\alpha} (e+t) \log^{\beta} (e+\log (e+t))$ with $\alpha, \beta>0.$
\end{itemize}

\subsection{ Sparse family} We also need a system of dyadic calculus from \cite{ler2, ler3}, so in this subsection, we introduce a part of it.
\begin{definition}\label{def1.2}
	By a dyadic lattice $\mathcal{D},$ we mean a collection of cubes which satisfies the following properties:
	\begin{enumerate}[(i).]
		\item For any $Q \in \mathcal{D}$ its sidelength $\ell_{Q}$ is of the form $2^{k}, k \in \mathbb{Z}$;
		\item $Q \cap R \in\{Q, R, \emptyset\}$ for any $Q, R \in \mathcal{D}$;
		\item The cubes of a fixed sidelength $2^{k}$ form a partition of $\mathbb{R}^{n}$.
	\end{enumerate}

\end{definition}
An interesting and crucial theorem in dyadic calculus is Three Lattice Theorem, which asserts that given a dyadic lattice $\mathcal{D},$ there exist $3^{n}$ dyadic lattices $\mathcal{D}_{1}, \ldots, \mathcal{D}_{3^{n}}$ such that for each cube $Q \in \mathcal{D}$, we can find a cube $R_{Q}$ in some $\mathcal{D}_{j}$ such that $Q \subseteq R_{Q}$ and $3 l_{Q}=l_{R_{Q}}$.\par

According to the method of taking dyadic lattice $\mathcal{D}$ in Definition \ref{def1.2}, we can give the definition of sparse family $\mathcal{S}$ as follows.

\begin{definition}
	Let $\mathcal{D}$ be a dyadic lattice. $\mathcal{S} \subset \mathcal{D}$ is called a $\eta$-sparse family with $\eta \in(0,1)$ if for every cube $Q \in \mathcal{S},$
	$$\left|\bigcup_{P \in \mathcal{S}, P \subsetneq Q} P\right|\leq (1-\eta) \left| Q \right|.$$
\end{definition}
There is another equivalent definitions of sparsity for a collection of sets. If we define
$$E(Q)=Q \backslash \bigcup_{P \in \mathcal{S}, P \subsetneq Q} P,$$
then it is easy to deduce that the sets $E(Q)$ are pairwise disjoint and $|E(Q)| \geq \eta |Q|$.\par
Let $\mathcal{D}$ be a dyadic lattice and $\mathcal{S}\subseteq \mathcal{D}$ be a $\eta$-sparse family, the sparse operator is defined by
$$\mathcal{A}_{r, \mathcal{S}} f(x)= \sum_{Q \in \mathcal{S}}\langle|f|\rangle_{r,Q}\chi_{Q}(x)=\sum_{Q \in \mathcal{S}}\left(\frac{1}{|Q|} \int_{Q}|f(y)|^{r} d y\right)^{\frac{1}{r}} \chi_{Q}(x),$$
where $r>0$ and $\langle|f|\rangle_{r,Q}^r=\frac{1}{|Q|} \int_{Q}|f(y)|^{r} d y$. Furthermore, associated with the constants $\beta \in[0, \infty)$ and $r_{1}, r_{2}, r \in[1, \infty)$, we can define the bilinear sparse operators $\mathcal{A}_{\mathcal{S} ; L(\log L)^{\beta}, L^{r}}$ and $\mathcal{A}_{\mathcal{S}; L^{r_{1}}, L^{r_{2}}}$ by
$$
\mathcal{A}_{\mathcal{S} ; L(\log L)^{\beta}, L^{r}}(f, g)=\sum_{Q \in \mathcal{S}}|Q|\|f\|_{L(\log L)^{\beta}, Q}\langle|g|\rangle_{r,Q},
$$
$$
\mathcal{A}_{\mathcal{S}; L^{r_{1}}, L^{r_{2}}}(f, g)=\sum_{Q \in \mathcal{S}}|Q|\langle|f|\rangle_{r_{1},Q }\langle|g|\rangle_{r_{2},Q}.
$$\par
According to the above definitions, we can give the condition that the operator $T$ satisfies the bilinear sparse domination. More specifically, for $\beta, q \in(0, \infty)$, we say that a sublinear operator $T$ acting on $\cup_{p \geq 1} L^{p}\left(\mathbb{R}^{n}\right) $ enjoys a $\left(L(\log L)^{\beta}, L^{q}\right)$-bilinear sparse domination with bound $A$, if for each bounded function $f$ with compact support, there exists a sparse family $\mathcal{S}$ of cubes, such that
$$
\left|\int_{\mathbb{R}^{n}} g(x) T f(x) dx\right| \leq A \mathcal{A}_{\mathcal{S}, L(\log L)^{\beta}, L^{q}}(f, g),
$$
holds for all bounded function $g$.

The following results of bilinear sparse domination are crucial in our analysis (\cite[Corollary 5.1]{hu2}).
\begin{lemma}[\cite{hu2}]\label{lem1.2}
Let $\Omega_{1}, \Omega_{2}$ be homogeneous of degree zero, have mean value zero and $\Omega_{1}, \Omega_{2} \in L^{\infty}\left(\mathbb{S}^{n-1}\right)$. Let $r \in(1,3 / 2]$. Then for each bounded function $f$ with compact support, there exists a $\frac{1}{2} \frac{1}{9^n}$-sparse family of cubes $\mathcal{S}=\{Q\}$, and functions $J_{1}$ and $J_{2}$, such that for each function $g$,
\begin{equation}\label{ie1.1}
\begin{aligned}
&\left|\int_{\mathbb{R}^{n}} J_{1}(x) g(x) \mathrm{d} x\right| \lesssim r^{\prime} \mathcal{A}_{\mathcal{S} ; L (\log L), L^{r}}(f, g), \\
&\left|\int_{\mathbb{R}^{n}} J_{2}(x) g(x) \mathrm{d} x\right| \lesssim r^{\prime 2} \mathcal{A}_{\mathcal{S} ; L^{1}, L^{r}}(f, g),
\end{aligned}
\end{equation}
and for a. e. $x \in \mathbb{R}^{n}$,
\begin{equation}\label{ie1.2}
T_{\Omega_{1}} T_{\Omega_{2}} f(x)=J_{1}(x)+J_{2}(x).
\end{equation}
\end{lemma}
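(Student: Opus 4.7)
My plan is to combine the bilinear sparse domination from Lemma \ref{lem1.2} with a quantitative Boyd-type interpolation on the RIQBFS $\X$, using $p$-convexity to pass to the Banach RIBFS $\Y := \X^{1/p}$ whenever a genuine Banach tool is required.

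First, fix $r \in (1, 3/2]$ (to be tuned later so that $r' \simeq [w]_{A_\infty}$) and apply Lemma \ref{lem1.2}, yielding $T_{\Omega_1}T_{\Omega_2}f = J_1 + J_2$ with the bilinear sparse forms $\mathcal{A}_{\mathcal{S}; L\log L, L^r}(f,g)$ and $\mathcal{A}_{\mathcal{S}; L^1, L^r}(f, g)$ controlling $\int J_1 g$ and $\int J_2 g$ with constants $r'$ and $(r')^2$, respectively. Standard weighted $L^{s_0}$ estimates for these sparse forms, via the Orlicz maximal operator theory of Section 2.2, the $A_\infty$ reverse-H\"older property in Lemma \ref{lem1.1}, and the self-improvement in Lemma \ref{lem1.4}, then produce, for each admissible $s_0$,
\[
\|J_i\|_{L^{s_0}(w)} \leq K_i(s_0, [w]_{A_\infty}, [w]_{A_{p_\X}}) \|f\|_{L^{s_0}(w)}, \qquad i=1,2,
\]
with $K_i$ a product of $r'$ (resp.\ $(r')^2$) and explicit weight constants.

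Second, to lift these $L^{s_0}(w)$ bounds to a bound on $\X(w)$, I would invoke the $p$-convexity of $\X$. Since $0 < p \leq 1$, the identity $\|g\|_{\X(w)}^p = \||g|^p\|_{\Y(w)}$ holds on the Banach RIBFS $\Y = \X^{1/p}$, and combined with $|a+b|^p \leq |a|^p + |b|^p$ this reduces matters to showing $\||J_i|^p\|_{\Y(w)} \lesssim \|f\|_{\X(w)}^p$ for each $i$. Raising the $L^{s_0}(w)$ bounds for $J_i$ to the $p$-th power converts them into $\||J_i|^p\|_{L^{s_0/p}(w)} \leq K_i^p \||f|^p\|_{L^{s_0/p}(w)}$, and applying Boyd's interpolation theorem on the Banach RIBFS $\Y$ between two endpoints $s_0/p$ sandwiching $p_\Y = p_\X/p$ and $q_\Y = q_\X/p$ yields
\[
\||J_i|^p\|_{\Y(w)} \lesssim K_i^p \||f|^p\|_{\Y(w)} = K_i^p \|f\|_{\X(w)}^p.
\]

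The main obstacle is twofold. First, the admissible window of $s_0$ for the sparse $L^{s_0}(w)$ bound must strictly contain $[p_\X, q_\X]$: the hypothesis $q_\X < 2p - p/(1+p_\X)$ is precisely the condition that forces $q_\Y < 2 - 1/(1+p_\X)$ which, combined with Lemma \ref{lem1.4} applied to $w \in A_{p_\X}$ to give $w \in A_{p_\X - \varepsilon}$, opens up a window large enough on both sides of $[p_\Y, q_\Y]$ for Boyd's theorem to apply. Second, the Boyd interpolation must be carried out \emph{quantitatively}, with explicit tracking of the weight constants $K_i$; I expect this to be handled by a Kolmogorov-Lerner-type rearrangement argument on the decreasing rearrangement $(J_i)_w^*$ combined with Lorentz-Luxemburg duality on $\overline{\Y}$. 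Finally, tuning $r$ so that $r' \simeq [w]_{A_\infty}$ in Lemma \ref{lem1.2} balances the sparse constants $r'$ and $(r')^2$ and, after combining all contributions, yields the declared exponents $1+1/p$, $2+1/p$, $1/p_\X$, and $2/p_\X$ in the final weight dependence.
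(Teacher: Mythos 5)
Your proposal does not prove the statement at hand; it proves a different one. The statement is Lemma \ref{lem1.2} itself, i.e.\ the bilinear sparse domination: the existence, for each bounded compactly supported $f$, of a $\frac{1}{2\cdot 9^n}$-sparse family $\mathcal{S}$ and a decomposition $T_{\Omega_1}T_{\Omega_2}f=J_1+J_2$ with $\bigl|\int J_1 g\bigr|\lesssim r'\,\mathcal{A}_{\mathcal{S};L(\log L),L^r}(f,g)$ and $\bigl|\int J_2 g\bigr|\lesssim (r')^2\,\mathcal{A}_{\mathcal{S};L^1,L^r}(f,g)$. Your very first step is ``apply Lemma \ref{lem1.2}, yielding $T_{\Omega_1}T_{\Omega_2}f=J_1+J_2$\dots'', so the argument is circular with respect to its target: it assumes exactly what it is supposed to establish, and everything that follows (weighted $L^{s_0}$ bounds for the sparse forms, passage to $\Y=\X^{1/p}$ via $p$-convexity, Boyd interpolation, the hypothesis $q_\X<2p-p/(1+p_\X)$, and the final exponents $1+1/p$, $2+1/p$, $1/p_\X$, $2/p_\X$) is a sketch of the deduction of Theorem \ref{thm1.2} \emph{from} the sparse domination, not a proof of the sparse domination.

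A proof of Lemma \ref{lem1.2} has to be of an entirely different nature. In the source \cite{hu2} (and in the closely parallel Lemma \ref{lem2.1} of this paper, which is the $p$-power analogue) the ingredients are: (i) the $L^{r'}$ boundedness of $T_{\Omega_1}$ with bound $O(r')$; (ii) weak type $L\log L$ (respectively $L^1$) endpoint estimates for $T_{\Omega_1}T_{\Omega_2}$ and for the grand maximal truncated operators $\mathscr{M}_{T_{1},r'}T_2$ and $\mathscr{M}_{T_{2},r'}$; and (iii) a local exceptional-set construction on a cube $Q_0$ — define the sets $E_1,E_2,E_3$ where these operators exceed a multiple of the relevant Luxemburg averages of $f$, run a Calder\'on--Zygmund decomposition of $\chi_E$ to extract the cubes $\{P_l\}$, split $T_1T_2(f\chi_{9Q_0})\chi_{Q_0}$ into the two pieces according to whether the inner truncation lives on $3P_l$ or its complement, and iterate to produce the sparse family and the functions $J_1,J_2$. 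None of this machinery appears in your proposal, so there is no route from what you wrote to the claimed conclusion. Weighted interpolation on $\X(w)$ cannot substitute for it: the lemma is an unweighted, pointwise-decomposition statement about a single fixed $f$, prior to any weight or any function space.
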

\par
Here, we give a brief introduction to sparse domination.
The study of sparse domination is a very active research area in Harmonic analysis in recent years. It helps to simplify the proof of some well  known results and even can be used to obtain the dependence of the norm constant on the weight functions. For example, Lerner \cite{ler20} introduced a class of sparse operators and gave an alternative and simple proof of the $A_2$ conjecture. Later on, Lerner \cite{ler2}  obtained quantitative weighted bounds for Calder\'{o}n-Zygmund operator $T$ which satisfies a H\"{o}lder-Lipschitz condition. Since then, great attentions have been paid to the study of the sparse bounds. We refer the readers to \cite{cej, con, cul}  and the references therein for more informations.\par

\section{ Proofs of Theorems \ref{thm1.1} }\label {Sect 3}

This section is devoted to prove Theorem \ref{thm1.1} and Corollary \ref{cor1.1}. To demonstrate Theorem \ref{thm1.1}, we need the following lemma in \cite[Lemma 3.3]{and}.
\begin{lemma}[\cite{and}]\label{lem1.3}
Let $\mathbb{X}$ be an RIQBFS which is $p$-convex for some $0<p \leq 1$. If $1<p_{\mathbb{X}}<\infty$, then for all $w \in A_{p_{\mathbb{X}}},$ we have
$$
\|M\|_{\mathbb{X}(w) \mapsto \mathbb{X}(w)} \leq C[w]_{A_{p_{\mathbb{X}}}}^{1 / p_{\mathbb{X}}},
$$
where $C$ is an absolute constant only depending on $p_{\mathbb{X}}$ and $n$.
\end{lemma}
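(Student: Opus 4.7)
The plan is to prove this quantitative maximal bound by combining three ingredients: a pointwise sparse domination of $M$, a duality reformulation enabled by $p$-convexity, and the sharp mixed $A_p$--$A_\infty$ weighted bound for $M$ on a nearby $L^{p_0}(w)$ space extrapolated to the RIBFS setting.

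\textbf{Step 1 (Sparse domination of $M$).} I will start with the trivial pointwise bound $Mf(x)\leq C_n\mathcal{A}_{1,\mathcal{S}}f(x)$ for some sparse family $\mathcal{S}=\mathcal{S}(f)$, so it suffices to bound the linear sparse operator $\mathcal{A}_{1,\mathcal{S}}$ on $\mathbb{X}(w)$. Linearity is crucial since it allows duality to be applied.

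\textbf{Step 2 (Duality via $p$-convexity).} The $p$-convexity hypothesis with $0<p\leq 1$ guarantees that $\mathbb{Y}:=\mathbb{X}^{1/p}$ is a genuine RIBFS with Boyd lower index $p_{\mathbb{Y}}=p_{\mathbb{X}}/p>1$. The weighted version of the duality formula recalled in the excerpt,
\begin{equation*}
\|f\|_{\mathbb{X}(w)}\simeq\sup\Bigl\{\Bigl(\int_{\mathbb{R}^{n}}|f|^{p}\,g\,w\,dx\Bigr)^{1/p}:\,g\in\mathcal{M}^{+},\ \|g\|_{\mathbb{Y}'(w)}\leq 1\Bigr\},
\end{equation*}
will reduce the norm estimate on the quasi-Banach $\mathbb{X}(w)$ to a bilinear pairing that can be handled on the Banach dual $\mathbb{Y}'(w)$.

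\textbf{Step 3 (Endpoint $L^{p_0}(w)$ bound).} Since $p_{\mathbb{X}}>1$, Lemma \ref{lem1.4} will produce $p_{0}=p_{\mathbb{X}}-\varepsilon\in(1,p_{\mathbb{X}})$, with $\varepsilon$ governed by $[\sigma]_{A_{\infty}}$ ($\sigma=w^{1-p_{\mathbb{X}}'}$), such that $w\in A_{p_{0}}$ and $[w]_{A_{p_{0}}}\leq 2^{p_{\mathbb{X}}-1}[w]_{A_{p_{\mathbb{X}}}}$. I will then invoke the Hyt\"{o}nen--P\'{e}rez mixed $A_{p}$--$A_{\infty}$ bound
\begin{equation*}
\|Mf\|_{L^{p_{0}}(w)}\leq C_{n}(p_{0}')^{1/p_{0}}\bigl([w]_{A_{p_{0}}}\,[w^{1-p_{0}'}]_{A_{\infty}}\bigr)^{1/p_{0}}\|f\|_{L^{p_{0}}(w)},
\end{equation*}
and use Lemma \ref{lem1.1} to control $[w^{1-p_{0}'}]_{A_{\infty}}$. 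Together with the trivial bound $\|M\|_{L^{\infty}\to L^{\infty}}\leq 1$, this provides the two endpoint bounds needed to interpolate.

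\textbf{Step 4 (Interpolation and return to $\mathbb{X}(w)$).} Since $p_{\mathbb{Y}}>p_{0}$, a Calder\'{o}n-type interpolation for RIBFS---equivalently, the Rubio de Francia extrapolation machinery adapted to rearrangement invariant spaces developed in Chapter 3 of \cite{cur}---transfers the $L^{p_{0}}(w)$ and $L^{\infty}$ bounds into a bound of the sparse operator (and hence of $M$) on $\mathbb{Y}(w)$. The duality of Step 2 then returns this to $\mathbb{X}(w)$, producing the desired inequality with the exponent $1/p_{\mathbb{X}}$.

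The main obstacle is Step 4, where the various powers of $[w]_{A_{p_{\mathbb{X}}}}$ coming from the reverse openness, the mixed maximal estimate, and the interpolation kernel must be carefully tracked so that they collapse to the clean exponent $1/p_{\mathbb{X}}$. A naive use of Buckley's $[w]_{A_{p_{0}}}^{1/(p_{0}-1)}$ on $L^{p_{0}}(w)$ would only yield the cruder exponent $1/(p_{\mathbb{X}}-1)$; the gain to $1/p_{\mathbb{X}}$ genuinely relies on the $1/p_{0}$ exponent of the Hyt\"{o}nen--P\'{e}rez bound together with the Boyd-index--governed interpolation on the RIBFS $\mathbb{Y}(w)$.
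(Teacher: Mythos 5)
You should first be aware that the paper itself contains no proof of this lemma: it is imported verbatim as Lemma 3.3 of \cite{and}, and the argument behind that citation (going back to \cite{cur}) runs through the sharp weighted \emph{weak-type} inequality $\|M\|_{L^{p_0}(w)\to L^{p_0,\infty}(w)}\le C[w]_{A_{p_0}}^{1/p_0}$, the resulting Calder\'on-operator estimate $(Mf)^*_w(2t)\lesssim [w]_{A_{p_0}}^{1/p_0}\bigl(t^{-1}\int_0^t f^*_w(s)^{p_0}\,ds\bigr)^{1/p_0}$, and the Boyd-index boundedness of the Hardy averaging operator on $\overline{\mathbb{X}}^{1/p_0}$ for $p_0<p_{\mathbb{X}}$ --- not through sparse domination or extrapolation. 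Your proposal goes wrong at Step 1, and the error is fatal rather than cosmetic: the exponent $1/p_{\mathbb{X}}$ is a weak-type phenomenon, whereas the positive sparse operator $\mathcal{A}_{1,\mathcal{S}}$ you reduce to has sharp strong-type norm $\|\mathcal{A}_{1,\mathcal{S}}\|_{L^p(w)\to L^p(w)}\simeq [w]_{A_p}^{\max\{1,1/(p-1)\}}$ (this is the content, and the sharpness, of the $A_2$ theorem for positive sparse forms), which is strictly larger than $[w]_{A_p}^{1/p}$ for every $p\in(1,\infty)$. Once you replace $M$ by $\mathcal{A}_{1,\mathcal{S}}$ and treat it as a linear operator to be handled by duality, the target bound is out of reach no matter how Steps 2--4 are executed.

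The later steps do not close either. The Hyt\"onen--P\'erez bound you invoke carries the factor $[w^{1-p_0'}]_{A_\infty}^{1/p_0}$, and Lemma \ref{lem1.1} (a reverse H\"older inequality for a given $A_\infty$ weight) gives no control of the $A_\infty$ constant of the dual weight; the only generic bound is $[w^{1-p_0'}]_{A_\infty}\le c_n[w]_{A_{p_0}}^{1/(p_0-1)}$, which is exactly what turns the exponent $1/p_0$ back into Buckley's $1/(p_0-1)$. Similarly, the interpolation constant in Step 4 blows up like a power of $(p_{\mathbb{X}}-p_0)^{-1}\sim[\sigma]_{A_\infty}$ as $p_0\uparrow p_{\mathbb{X}}$, so the quantities you defer to ``careful tracking'' genuinely do not collapse to $[w]_{A_{p_{\mathbb{X}}}}^{1/p_{\mathbb{X}}}$. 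There is also a structural mismatch between Steps 2 and 4: interpolation would give you a bound for $M$ on $\mathbb{Y}(w)=\mathbb{X}^{1/p}(w)$, while the duality formula requires an estimate for $\int_{\mathbb{R}^n}(Mf)^p g\,w\,dx$ with $g$ normalized in $\mathbb{Y}'(w)$, i.e.\ a statement about $(Mf)^p$; neither implies the other. Finally, a sanity check you should run on the statement itself: for $\mathbb{X}=L^q$ (which is $1$-convex, has $p_{\mathbb{X}}=q$, and satisfies $\mathbb{X}(w)=L^q(w)$) the asserted bound reads $\|M\|_{L^q(w)}\le C[w]_{A_q}^{1/q}$, which would improve on Buckley's sharp exponent $1/(q-1)$. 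The estimate that the weak-type/Boyd argument actually produces carries an additional factor of roughly $[\sigma]_{A_\infty}^{1/p_{\mathbb{X}}}$ (or requires $w\in A_{p_0}$ for a fixed $p_0<p_{\mathbb{X}}$), and any correct proof --- yours included --- must account for that factor rather than hope it disappears.
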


\begin{proof}[Proof of Theorem $\ref{thm1.1}$]
We define the weighted dyadic Hardy-Littlewood maximal operators $M_{w}^{\mathcal{D}}$ and $M_{w,r}^{\mathcal{D}}$ by
	$$M_{w}^{\mathcal{D}} f(x):=\sup _{x \in R, R \in \mathcal{D}} \frac{1}{w(R)} \int_{R}|f(y)| w(y) d y, \quad f \in L_{\operatorname{loc}}^{1}\left(\mathbb{R}^{n}\right),$$

$$M_{w,r}^{\mathcal{D}} f(x):=\sup _{x \in R, R \in \mathcal{D}}
\left(\frac{1}{w(R)} \int_{R}|f(y)|^r w(y) d y\right)^{\frac{1}{r}}, \quad f \in L_{\operatorname{loc}}^{r}\left(\mathbb{R}^{n}\right),
$$
	where $w \in A_{\infty}$ and $\mathcal{D}$ is the given dyadic grid.\par
First, we consider the case of $ 1<p_{\X}\leq q_{\X}< 2-\frac{1}{1+p_{\X}}=:q_0.$
For a fixed $w \in A_{p_{\X}}$, by (\ref{ie1.2}), we obtain

\begin{equation*}
\begin{aligned}
\left\| T_{\Omega_1}T_{\Omega_2}f\right\|_{\X(w)} &=\sup_{\|g\|_{\X'(w)}\leq 1}\left|\int_{\mathbb{R}^{n}} T_{\Omega_1}T_{\Omega_2} f(x) g(x) w(x) d x\right|\\
&\leq \sup_{\|g\|_{\X'(w)}\leq 1}\left|\int_{\mathbb{R}^{n}} J_1(x) g(x) w(x) d x\right|+\sup_{\|g\|_{\X'(w)}\leq 1}\left|\int_{\mathbb{R}^{n}} J_2(x) g(x) w(x) d x\right|\\
&=:I_1+I_2,\\
\end{aligned}
\end{equation*}
where $I_i=\sup\limits_{\|g\|_{\X'(w)}\leq 1}\left|\int_{\mathbb{R}^{n}} J_i(x) g(x) w(x) d x\right|, i=1,2.$\par
Consider the contribuion of $I_1$.  Take and fix any $g \in \mathbb{X}^{\prime}(w)$ with $\|g\|_{\mathbb{X}^{\prime}(w)} \leq 1$, by (\ref{ie1.1}),
\begin{equation*}
\left|\int_{\mathbb{R}^{n}} J_1(x) g(x) w(x) d x\right|\lesssim r' \sum_{Q \in \mathcal{S}} \left\|f\right\|_{L(\log{L}),Q}{\langle|gw|\rangle}_{r,Q}|Q|.
\end{equation*}
A direct computation gives that
\begin{equation*}
\begin{aligned}
{\langle|gw|\rangle}_{r,Q}&=\left(\frac{1}{|Q|}\int_Q |g(x)w(x)|^r dx\right)^{\frac{1}{r}}=\left(\frac{1}{|Q|}\int_Q |g(x)|^r |w(x)|^{\frac{1}{s}}|w(x)|^{r-\frac{1}{s} } dx\right)^{\frac{1}{r}} \\
&\leq \left(\frac{1}{|Q|}\int_Q |g(x)|^{rs} w(x) dx\right)^{\frac{1}{rs}} \left(\frac{1}{|Q|}\int_Q |w(x)| ^{(r-\frac{1}{s})s'} dx\right)^{\frac{1}{rs'}}.
\end{aligned}
\end{equation*}
Take appropriate $r$ and $s$ such that $(r-\frac{1}{s})s'<1+\frac{1}{2^{11+n}[w]_{A_\infty}}.$ For example, let us choose
$r=1+\frac{1}{2^{13+n}p_{\X}[w]_{A_\infty}}, s=1+\frac{1}{2p_{\X}},$ then
$$(r-\frac{1}{s})s'=1+\frac{1}{2^{13+n}p_{\X}[w]_{A_\infty}}
+\frac{1}{2^{12+n}[w]_{A_\infty}}<1+\frac{1}{2^{11+n}[w]_{A_\infty}}.$$
Thus, combining Lemma \ref{lem1.1}, we deduce that
\begin{equation*}
\begin{aligned}
\frac{1}{|Q|}\int_{Q}(w(x))^{(r-\frac{1}{s})s'}dx &\leq \left(\frac{2}{|Q|}\int_{Q}w(x) dx \right)^{1-\frac{1}{rs}}.
\end{aligned}
\end{equation*}
Now one can get
\begin{equation*}
\begin{aligned}
{\langle|gw|\rangle}_{r,Q}\cdot|Q|&\leq  \left(\frac{1}{|w(Q)|}\int_Q |g(x)|^{rs}w(x) dx\right)^{\frac{1}{rs}} w(Q)=: g_{Q,w}^{rs}\cdot w(Q).
\end{aligned}
\end{equation*}\par
Taking into account the generalized H\"{o}lder's inequality and recalling that $M^2$ is $M$ iterated 2 times, we have
\begin{equation*}
\begin{aligned}
\sum_{Q \in \mathcal{S}} \left\|f\right\|_{L(\log L),Q}{\langle|gw|\rangle}_{r,Q}|Q|&\leq \sum_{Q \in \mathcal{S}} \left\|f\right\|_{L(\log L),Q}w(Q)g_{Q,w}^{rs} \\
&\leq \sum_{B \in \mathcal{B}} \left\|f\right\|_{L(\log L),B}g_{B,w}^{2s} \sum_{\substack{R \in \S\\\pi(R)=B}}w(R) \\
&\leq C(n)[w]_{A_\infty}\sum_{B \in \mathcal{B}} \left\|f\right\|_{L(\log L),B}w(B)g_{B,w}^{2s} \\
&=C(n)[w]_{A_{\infty}}  \int_{\R^{n}} \sum_{B \in \mathcal{B}} \left\|f \right\|_{L(\log L),B}g_{B,w}^{2s}\chi_{B}(x)w(x)dx \\
&\leq C(n)[w]_{A_{\infty}}  \int_{\R^{n}} M_{L(\log L)}f(x)M_{w,2s}^{\mathcal{D}}g(x)w(x)dx\\
&\leq C(n)[w]_{A_{\infty}}   \int_{\R^{n}} M^2 f(x)M_{w,2s}^{\mathcal{D}}g(x)w(x)dx\\
&\leq C(n)[w]_{A_{\infty}}  \left\| M^2f\right\|_{\X(w)} \left\| M_{w,2s}^{\mathcal{D}}g\right\|_{\X'(w)},
\end{aligned}
\end{equation*}
where $\mathcal{B}$ is the family of the principal cubes in the usual sense and $\pi(R)$ is the minimal principal cube which contains $R$. That is
	$$\mathcal{B}=\cup_{k=0}^{\infty} \mathcal{B}_{k}$$
	with $\mathcal{B}_{0}:=\{$ maximal cubes in $\mathcal{S}\}$ and
	$$\mathcal{B}_{k+1}:=\underset{B\in \mathcal{B}_{k}}{\cup} \operatorname{ch}_{\mathcal{B}}(B), \quad \operatorname{ch}_{\mathcal{B}}(B)=\{R \subsetneq B \text { maximal s.t. } \tau(R)>2 \tau(B)\},$$
	where $\tau(R)=\|f\|_{{L(\log L)}, R}g_{R, w}^{2s}$. \par
Now we observe that, using Lemma \ref{lem1.3},
\begin{equation*}
\begin{aligned}
I &\leq C(n)[w]_{A_\infty}^2  [w]_{A_{p_\X}}^{\frac{2}{p_\X}} \left\|f\right\|_{\X(w)} \left\|\left( M_{w}^{\mathcal{D}}(|g|^{2s})\right)^{\frac{1}{2s}} \right\|_{\X'(w)} \\
&= C(n)[w]_{A_\infty}^2  [w]_{A_{p_\X}}^{\frac{2}{p_\X}} \left\|f\right\|_{\X(w)} \left\|\left( M_{w}^{\mathcal{D}}(|g|^{2s})\right) \right\|_{\X'^{\frac{1}{2s}}(w)}^{\frac{1}{2s}} \\
&\lesssim[w]_{A_\infty}^2 [w]_{A_{p_\X}}^{\frac{2}{p_\X}} \left\|f\right\|_{\X(w)} \left\||g|^{2s} \right\|_{\X'^{\frac{1}{2s}}(w)}^{\frac{1}{2s}} \\
&\leq [w]_{A_\infty}^2 [w]_{A_{p_\X}}^{\frac{2}{p_\X}} \left\|f\right\|_{\X(w)},  \\
\end{aligned}
\end{equation*}
 where in the second inequality of the countdown, we have used the fact that the boundedness of $M_w^{\mathcal{D}}$( see \cite[Theorem 3.2]{cur}). Indeed, using the condition $q_{\X}<\frac{2s}{2s-1}$, it’s easy to deduce that $$p_{\X'^{\frac{1}{2s}}}=\frac{p_{{\X}'}}{2s}=\frac{{(q_\X)}^{'}}{2s}>1.$$\par

Now we turn our attention to the estimate of $I_2.$ For this case, arguing as in the first case, we obtain
\begin{equation*}
\begin{aligned}
\sum_{Q \in \mathcal{S}}\langle|f|\rangle_Q \langle|gw|\rangle_{r,Q}|Q|
&\lesssim  \sum_{Q \in \mathcal{S}}\langle|f|\rangle_Q g_{Q,w}^{rs}w(Q)\\
&\leq  \sum_{Q \in \mathcal{S}} \frac{1}{w(Q)}\left( \int_Q(Mf(x))^{\frac{1}{2}}(M_{w,2s}^{\mathcal{D}}g(x))^{\frac{1}{2}}w(x)dx \right)^2.
\end{aligned}
\end{equation*}
That fact together with Lemma \ref{lem1.2} yields
\begin{equation}\label{ie1.4}
\left|\int_{\mathbb{R}^n} J_2(x)g(x)w(x)dx\right| \lesssim r'^2\sum_{Q \in \mathcal{S}} \frac{1}{w(Q)}\left( \int_Q(Mf(x))^{\frac{1}{2}}(M_{w,2s}^{\mathcal{D}}g(x))^{\frac{1}{2}}w(x)dx \right)^2.
\end{equation}
Now we note that $\mathcal{S}$ is a $\frac{1}{2\cdot 9^n}$ -sparse family (i.e., for any $Q\in \mathcal{S},$ there exists $E(Q)$ such that $|E(Q)|\geq \frac{1}{2\cdot 9^n}|Q|$ ). Hence, for each dyadic cube $Q \in \mathcal{S}$, we have
\begin{equation}\label{ie1.3}
\begin{aligned}
\sum_{R \subseteq Q}w(R)=\sum_{R \subseteq Q}\frac{w(R)}{|R|}|R|
&\leq 2\cdot9^n \sum_{R \subseteq Q}\frac{w(R)}{|R|}\cdot|E(R)| \\
&\leq 2\cdot9^n \sum_{R \subseteq Q}\int_{E(R)} M(w\chi_R)(x)dx \\
&\leq  2\cdot9^n\int_Q M(w\chi_Q)(x)dx \\
&\leq  2\cdot9^n[w]_{A_\infty} w(Q).
\end{aligned}
\end{equation}
Combining (\ref{ie1.3}) with (\ref{ie1.4}) and the Carleson embedding theorem, together with the generalized H\"{o}lder's inequality, one may obtain
\begin{equation*}
\begin{aligned}
I_2  &\lesssim [w]_{A_\infty}^2\sup_{\|g\|_{\X'(w)}\leq 1}\sum_{Q \in \mathcal{S}} \frac{1}{w(Q)}\left( \int_Q(Mf(x))^{\frac{1}{2}}(M_{w,2s}^{\mathcal{D}}g(x))^{\frac{1}{2}}w(x)dx \right)^2\\
& \lesssim[w]_{A_\infty}^3  \left\| Mf\right\|_{\X(w)} \sup_{\|g\|_{\X'(w)}\leq 1}\left\|M_{w,2s}^{\mathcal{D}}g\right\|_{\X'(w)}\\
& \leq [w]_{A_\infty}^3   [w]_{A_{p_\X}}^{\frac{1}{p_\X}} \left\| f\right\|_{\X(w)} \sup_{\|g\|_{\X'(w)}\leq 1} \left\||g|^{2s}\right\|_{\X'^{\frac{1}{2s}}(w)}^{\frac{1}{2s}}  \\
&\leq[w]_{A_\infty}^3  [w]_{A_{p_\X}}^{\frac{1}{p_\X}} \left\| f\right\|_{\X(w)}.
\end{aligned}
\end{equation*}\par
This inequality, combined with the estimate of $I_1$ yields
\begin{equation*}
\left\| T_{\Omega_1}T_{\Omega_2}f\right\|_{\X(w)} \lesssim [w]_{A_\infty}^2  [w]_{A_{p_\X}}^{\frac{1}{p_\X}} \left( [w]_{A_\infty}+[w]_{A_{p_\X}}^{\frac{1}{p_\X}} \right) \left\| f\right\|_{\X(w)}.
\end{equation*}

To end the proof we consider the case of $ r<p_\X \leq q_\X<\infty,$ where $ 1<r<\infty $ is any fixed constant. \par
If $w\in A_{p_\X /r},$
\begin{equation*}
\begin{aligned}
\left\| T_{\Omega_1}f\right\|_{\X(w)} &=\sup _{\|g\|_{\X^{\prime}(w)} \leq 1}\left|\int_{\R^n}T_{\Omega_1}f(x)g(x)w(x)dx \right|\\
&\lesssim \sup _{\|g\|_{\X^{\prime}(w)} \leq 1}\sum_{Q\in{\mathcal{S}}} \langle |f| \rangle_{r,Q}\langle |gw| \rangle_{1,Q}|Q|,
\end{aligned}
\end{equation*}
where the last step follows from \cite[Theorem A]{con} or \cite[Corollary 3.4]{ler1}.\par
By a direct computation and the generalized H\"{o}lder's inequality, it follows that
\begin{equation*}
\begin{aligned}
\sum_{Q\in{\mathcal{S}}} \langle |f| \rangle_{r,Q}\langle |gw| \rangle_{1,Q}|Q|&= \sum_{Q\in{\mathcal{S}}} \langle |f| \rangle_{r,Q}\frac{1}{w(Q)}\int_Q |g(x)|w(x)dx w(Q) \\
&\lesssim [w]_{A_\infty} \int_{\R^n} M_rf(x)M_w^{\mathcal{D}}g(x)w(x)dx\\
&\lesssim[w]_{A_\infty} \left\|M_rf\right\|_{\X(w)}\left\|M_w^{\mathcal{D}}g\right\|_{\X'(w)} \\
&\lesssim[w]_{A_\infty} \left\|M_rf\right\|_{\X(w)} \left\|g\right\|_{\X'(w)},
\end{aligned}
\end{equation*}
where in the first inequality we apply the Carleson embedding theorem.\\
Hence, by the fact that $\left\|M_rf\right\|_{\X(w)} \lesssim [w]_{A_{p_{\X}/r}}^{\frac{1}{rq}} \left\| f \right\|_{\X(w)}$ (see \cite[Lemma 3.1]{tan}), where $q=p_{\X}/{r}-\varepsilon (p_{\X}/{r})$ and $\varepsilon (p)$ is defined in Lemma \ref{lem1.4}, we obtain
$$\left\|T_{\Omega_1}f\right\|_{\X(w)} \leq  C[w]_{A_\infty}  [w]_{A_{p_{\X}/{r}}}^{\frac{1}{rq}} \left\| f \right\|_{\X(w)}. $$    \\
This inequality, combining with the definition of $T_{\Omega_1}T_{\Omega_2}$ gives

 $$\left\|T_{\Omega_1}T_{\Omega_2}f\right\|_{\X(w)}\leq C[w]_{A_\infty}^2  [w]_{A_{p_{\X}/{r}}}^{\frac{2}{rq}} \left\| f \right\|_{\X(w)}, $$
This finish the proof of the second case.\par
Therefore, we obtain

\begin{equation*}\left\|T_{\Omega_1}T_{\Omega_2}f\right\|_{\X(w)}\lesssim\left\{\begin{array}{ll}
[w]_{A_\infty}^2  [w]_{A_{p_{\X}/{r}}}^{\frac{2}{rq}} \left\| f \right\|_{\X(w)}, &\text{ if } r<p_\X\leq q_\X, w\in A_{\frac{p_\X}{r}}; \\
{[w]_{A_\infty}^2}[w]_{A_{p_\X}}^{\frac{1}{p_\X}}\left([w]_{A_\infty}+[w]_{A_{p_\X}}^{\frac{1}{p_\X}}\right)
\left\|f\right\|_{\X(w)},& \text{ if } 1<p_\X<q_0, w\in A_{p_\X} ,
\end{array}\right.
\end{equation*}
where $q_0=2-\frac{1}{1+p_{\X}}$.
\end{proof}

Using Theorem \ref{thm1.1}, we can easily get Corollary $\ref{cor1.1}$.

\begin{proof}[Proof of Corollary $\ref{cor1.1}$]
Recall that if $\X$ be a RIQBFS with $p$-convex then $\X ^{\frac{1}{p}}$ is a RIBFS. This together with Theorem \ref{thm1.1}
lead to the desired result. As a matter of fact, it suffices for us to prove that for any $r>0$, $\left\||f|^r\right\|^{1/r}_{\X(w)}=\left\|f\right\|_{\X ^r(w)}$ and $p_{\X ^r}=r\cdot p_{\X}, q_{\X ^r}=r\cdot q_{\X}$ hold for RIQBFS $\X$. To see this, using the fact that $\X ^{1/p}$ is a RIBFS, we have
$$p_{\X ^r}=p_{\X ^{\frac{1}{p}pr}}=pr\cdot p_{\X ^\frac{1}{p}}. $$
On the other hand,
$$p_{\X}=p_{\X ^{\frac{1}{p}p}}=p\cdot p_{\X ^\frac{1}{p}}. $$
Therefore $p_{\X ^r}=r\cdot p_{\X}$ and it also works for $q_\X$.\\
From the definitions of the norm of $\X ^r,$ it is easy to check that
$$\left\|f\right\|_{\X ^r(w)}=\left\||f|^{pr}\right\|^{\frac{1}{pr}}_{\X ^{\frac{1}{p}}(w)}$$
and
$$\left\||f|^r\right\|_{\X (w)}=\left\||f|^{pr}\right\|^{\frac{1}{p}}_{\X ^{\frac{1}{p}}(w)}.$$
Then we have $\left\||f|^r\right\|^{1/r}_{\X(w)}=\left\|f\right\|_{\X ^r(w)}.$

\end{proof}

\section{ Proofs of Theorem \ref{thm1.2} }\label {Sect 4}
The section will be devoted to prove Theorem \ref{thm1.2}. Using sparse domination method, we will show that the bilinear sparse domination of the composition of rough singular integral operators holds, which implies Theorem \ref{thm1.2}.

We start with some definitions. For a linear operator $T$ and $1\leq r <\infty$, we define the corresponding grand maximal
truncated operator $\mathscr{M}_{T, r}$ by
$$
\mathscr{M}_{T, r} f(x):=\sup _{Q \ni x}|Q|^{-\frac{1}{r}}\left\|T\left(f \chi_{\mathbb{R}^{n} \backslash 3 Q}\right) \chi_{Q}\right\|_{L^{r}\left(\mathbb{R}^{n}\right)},
$$
where the supremum is taken over all cubes $Q \subset \mathbb{R}^{n}$ containing $x $. It is well known that the operator $\mathscr{M}_{T, r}$ was introduced by Lerner \cite{ler1} and it plays a key role in establishing bilinear sparse domination of rough operator. Let $T_{1}, T_{2}$ be two linear operators. We define the grand maximal operator $\mathscr{M}_{T_{1} T_{2}, r}^{*}$ by
$$
\mathscr{M}_{T_{1} T_{2}, r}^{*} f(x):=\sup _{Q \ni x}\left(\frac{1}{|Q|} \int_{Q}\left|T_{1}\left(\chi_{\mathbb{R}^{n} \backslash 3 Q} T_{2}\left(f \chi_{\mathbb{R}^{n} \backslash 9 Q}\right)\right)(\xi)\right|^{r} \mathrm{~d} \xi\right)^{\frac{1}{r}} .
$$

Now, we can state bilinear sparse domination of the composition operator associated with RIQBFS $\X .$
\begin{lemma}\label{lem2.1}
 Let $1<r<\frac{3}{2}$, $0\leq \beta_{1}, \beta_{2} < \infty.$ Let $T_{1}, T_{2}$ be two linear operators satisfying additionally the following conditions
 \begin{enumerate}[(i).]
		\item the operator $T_{1}$ is bounded on $L^{r^{\prime}}\left(\mathbb{R}^{n}\right)$ with bound $A$;
		\item there exists $A_0>0$ such that for each $\lambda>0$,
$$
\left|\left\{x \in \mathbb{R}^{n}:\left|T_{1} T_{2} f(x)\right|>\lambda\right\}\right| \lesssim \int_{\mathbb{R}^{n}} \frac{A_{0}|f(x)|}{\lambda} \log ^{\beta_{1}}\left(\mathrm{e}+\frac{A_{0}|f(x)|}{\lambda}\right) \mathrm{d} x;
$$

		\item there exists $A_1,A_2>0$ such that for each $\lambda>0$,
$$
\left|\left\{x \in \mathbb{R}^{n}: \mathscr{M}_{T_{1}, r^{\prime}} T_{2} f(x)>\lambda\right\}\right| \lesssim \int_{\mathbb{R}^{n}} \frac{A_{1}|f(x)|}{\lambda} \log ^{\beta_{1}}\left(\mathrm{e}+\frac{A_{1}|f(x)|}{\lambda}\right) \mathrm{d} x
$$
and
$$
\left|\left\{x \in \mathbb{R}^{n}: \mathscr{M}_{T_{2}, r^{\prime}} f(x)>\lambda\right\}\right| \lesssim \int_{\mathbb{R}^{n}} \frac{A_{2}|f(x)|}{\lambda} \log ^{\beta_{2}}\left(\mathrm{e}+\frac{A_{2}|f(x)|}{\lambda}\right) \mathrm{d} x .
$$
	\end{enumerate}
Then for each $0<p\leq 1$ and a bounded function $f$ with compact support, there exists a $ \frac{1}{2\cdot9^{n}}$-sparse family of cubes $\mathcal{S}=\{Q\}$, and functions $H_{1}$ and $H_{2}$, such that for each function $g$,
$$\int_{\mathbb{R}^{n}} |H_{1}(x)|^p |g(x)| d x \lesssim\left(A_{0}^p+A_{1}^p\right) \mathcal{A}^{(p,1)}_{\mathcal{S} ; L(\log L)^{\beta_{1}}, L^{(r^{'}/p)^{'}}}(f, g),$$
$$\int_{\mathbb{R}^{n}} |H_{2}(x)|^p |g(x)| d x \lesssim A^p A_{2}^p \mathcal{A}^{(p,1)}_{\mathcal{S} ; L(\log L)^{\beta_{2}}, L^{(r^{'}/p)^{'}}}(f, g),$$
and for a.e. $x \in \mathbb{R}^{n}$,
$$
T_{1} T_{2} f(x)=H_{1}(x)+H_{2}(x),
$$
where $$\mathcal{A}^{(a,b)}_{\mathcal{S} ; L(\log L)^{\beta}, L^{t}}(f, g):=\sum_{Q \in \mathcal{S}}|Q|\|f\|^a_{L(\log L)^{\beta}, Q}\langle|g|\rangle^b_{t, Q}$$
with $0\leq a,b, \beta <\infty, 1\leq t <\infty.$

\end{lemma}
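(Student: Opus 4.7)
The plan is to adapt the sparse-domination algorithm of Lerner \cite{ler1}, in the form used to prove Lemma \ref{lem1.2} in \cite{hu2}, to the $p$-convex setting $0<p\leq 1$. The decisive new algebraic ingredient is the subadditivity $(a+b)^p\leq a^p+b^p$, valid precisely for $0<p\leq 1$, which allows the recursive decomposition of $T_1T_2 f$ to be raised to the $p$-th power while preserving the disjointness on which the stopping-time algorithm rests. The exponent $(r'/p)'$ appearing on the $g$-entry is then forced by the conjugate relation $p/r'+1/(r'/p)'=1$, i.e.\ it comes from a Hölder pairing that converts the $L^{r'}$-boundedness in hypothesis (i) into a testing against $\langle|g|\rangle_{(r'/p)',Q}$. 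By the Three-Lattice Theorem and a standard partition argument, it suffices to construct the sparse family inside a single dyadic cube $Q_0$ whose triple contains $\operatorname{supp}(f)$.

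For each cube $Q$ in the recursion, split $T_1T_2f\cdot\chi_Q$ into four pieces according to whether the arguments of $T_1$ and $T_2$ are localized to $9Q$, $3Q$, or their complements, and assign the ``fully local'' piece $T_1T_2(f\chi_{9Q})$ and the two $T_1$-outside pieces (dominated pointwise by $\mathscr{M}^{*}_{T_1T_2,r'}f$ and $\mathscr{M}_{T_1,r'}T_2 f$) to $H_1$, and the remaining $T_1$-local/$T_2$-outside piece to $H_2$; hypotheses (ii) and the first part of (iii) handle $H_1$, while hypothesis (i) combined with the second part of (iii) handles $H_2$ after $T_2$ is re-split around $9Q$. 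Define the stopping set $E_Q\subset Q$ as the set where at least one of $|T_1T_2(f\chi_{9Q})|$, $\mathscr{M}_{T_1,r'}T_2 f$, or $\mathscr{M}_{T_2,r'}f$ exceeds a prescribed multiple of the corresponding threshold $A_i\|f\|_{L(\log L)^{\beta_i},9Q}$. Applying Chebyshev to the weak-type inequalities in (ii) and (iii), together with the standard Young-function identity $\frac{1}{|Q|}\int_{9Q}\varphi(|f|/\|f\|_{\varphi,9Q})\leq 1$ for $\varphi(t)=t\log^{\beta}(e+t)$, one calibrates the constants so that $|E_Q|\leq\frac{1}{2\cdot 9^n}|Q|$. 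Cover $E_Q$ by maximal dyadic subcubes $\{P_j\}\subset Q$ and iterate on each $P_j$; the resulting collection $\mathcal{S}$ is $\frac{1}{2\cdot 9^n}$-sparse.

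On $Q\setminus\bigcup_j P_j$ all three maximal quantities are controlled, so the three $H_1$-pieces are pointwise $\lesssim(A_0+A_1)\|f\|_{L(\log L)^{\beta_1},9Q}$, and the local $L^{r'}$-norm of the $H_2$-piece is $\lesssim A\cdot A_2\|f\|_{L(\log L)^{\beta_2},9Q}|Q|^{1/r'}$. Raising the recursive identity $T_1T_2f\cdot\chi_Q=(\text{good part on }Q\setminus\bigcup P_j)+\sum_j T_1T_2f\cdot\chi_{P_j}$ to the $p$-th power and invoking $(a+b)^p\leq a^p+b^p$ together with the disjointness of the $P_j$ telescopes the argument into a pointwise $p$-power sparse bound for each of $|H_1|^p,|H_2|^p$. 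Pairing against $g$ on each cube via Hölder at conjugate exponents $r'/p$ and $(r'/p)'$ converts the $L^{r'}$-control of $H_2$ into the factor $\langle|g|\rangle_{(r'/p)',Q}$, yielding the claimed sparse forms with constants $(A_0^p+A_1^p)$ and $A^pA_2^p$. The principal technical obstacle is ensuring, uniformly in $Q$, that the stopping thresholds can be tuned to give $|E_Q|\leq\frac{1}{2\cdot 9^n}|Q|$; this requires a careful generalized Chebyshev estimate against $t\log^{\beta_i}(e+t)$ and clean bookkeeping of the absolute constants across the four-term splitting, so that the $p$-power structure in the sparse form survives the iteration.
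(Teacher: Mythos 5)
Your proposal follows essentially the same path as the paper's proof: the Lerner stopping-time scheme from \cite{ler1}, the exponent $(r'/p)'$ forced by a Hölder pairing against the $L^{r'}$-control in hypothesis (i), and the $p$-subadditivity $(a+b)^p\le a^p+b^p$ for $0<p\le 1$ to push the recursive decomposition through the $p$-th power. The one detail to tighten in a full write-up is that the pieces on a stopping cube $P_l$ are controlled only in $L^{r'}$ \emph{average} (via the grand maximal operator evaluated at a good point $x_l\in P_l\cap E^c$, followed by Hölder with exponents $r'/p$ and $(r'/p)'$), not pointwise as your description suggests, and that the paper works with the local variants $\mathscr{M}_{T_2,r';Q_0}$ and $\mathscr{M}^*_{T_1T_2,r';Q_0}$ (transferring the weak-type bound from hypothesis (iii) to the latter via \cite[Lemma~4.3]{hu2}) so that the stopping thresholds match the truncations of $f$ appearing in the recursion.
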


\begin{proof}[Proof of Lemma $\ref{lem2.1}$]
  For any $0<p\leq 1$, in order to establish bilinear sparse domination over operator $(T_{1} T_{2} f)^p$, we are going to follow the scheme of the proof of \cite[Theorem 3.1]{ler1}, together with some ideas in \cite{hu1}. For a fixed cube
  $Q_{0}$, we should also consider a local version of operators $\mathscr{M}_{T_{2}, r^{\prime}}$ and $\mathscr{M}_{T_{1} T_{2}, r^{\prime}}^{*}$ by
$$
\mathscr{M}_{T_{2} ; r^{\prime} ; Q_{0}} f(x)=\sup _{Q \ni x, Q \subset Q_{0}}|Q|^{-\frac{1}{r^{\prime}}}\left\|\chi_{ Q} T_{2}\left(f \chi_{3Q_{0} \backslash 3 Q}\right)\right\|_{L^{r^{\prime}}\left(\mathbb{R}^{n}\right)}
$$
and
$$
\mathscr{M}_{T_{1} T_{2}, r^{\prime} ; Q_{0}}^{*} f(x)=\sup _{Q \ni x, Q \subset Q_{0}}\left(\frac{1}{|Q|} \int_{Q}\left|T_{1}\left(\chi_{\mathbb{R}^{n} \backslash 3 Q} T_{2}\left(f \chi_{9 Q_{0} \backslash 9 Q}\right)\right)(\xi)\right|^{r^{\prime}} \mathrm{d} \xi\right)^{\frac{1}{r^{\prime}}},
$$
respectively.
Then we define three sets $E_i$ with $ i=1,2,3$ by

$$E_{1}=\left\{x \in Q_{0}:\left|T_{1} T_{2}\left(f \chi_{9Q_{0}}\right)(x)\right|>D A_{0}\|f\|_{L(\log L)^{\beta_{1}}, 9 Q_{0}}\right\};$$
$$E_{2}=\left\{x \in Q_{0}: \mathscr{M}_{T_{2}, r^{\prime} ; Q_{0}} f(x)>D A_{2}\|f\|_{L(\log L)^{\beta_{2},}, 9 Q_{0}}\right\};$$
$$E_{3}=\left\{x \in Q_{0}: \mathscr{M}_{T_{1} T_{2}, r^{\prime} ; Q_{0}}^{*} f(x)>D A_{1}\|f\|_{L(\log L)^{\beta_{1}} ,9Q_{0}}\right\},$$
with $D$ a positive constant.
Let $E=\cup_{i=1}^{3} E_{i}.$
Hence, by our hypothesis $(ii),$  $(iii)$ and \cite[Lemma 4.3]{hu2},  taking $D$ large enough, we deduce that
$$
|E| \leq \frac{1}{2^{n+2}}\left|Q_{0}\right|.
$$
Now, by using the Calder\'{o}n-Zygmund decomposition to the function $\chi_{E}$ on $Q_{0}$ at height $\lambda=$ $\frac{1}{2^{n+1}}$, we obtain pairwise disjoint cubes $\left\{P_{l}\right\}_l \subset \mathcal{D}\left(Q_{0}\right)$ such that
$$
\chi_{E}(x) \leq \frac{1}{2^{n+1}}
$$
for a.e. $x \notin \bigcup_l P_{l}$. Together with this we immediately obtain $\left|E \backslash \bigcup_{l} P_{l}\right|=0.$
At the same time, for each $l\geq 1$, we also have
$$
\frac{1}{2^{n+1}}\left|P_{l}\right| \leq\left|P_{l} \cap E\right| \leq \frac{1}{2}\left|P_{l}\right|.
$$
Observe that $\sum_{l}\left|P_{l}\right| \leq 2^{n+1}|E|\leq\frac{1}{2}\left|Q_{0}\right|,$ $P_{l} \cap E^{c} \neq \emptyset$.
Let
\begin{equation*}
\begin{aligned}
G_{1}(x):=&T_{1} T_{2}\left(f \chi_{9 Q_{0}}\right)(x) \chi_{Q_{0} \backslash \cup_{l} P_{l}}(x)\\
&+\sum_{l} T_{1}\left(\chi_{\mathbb{R}^{n} \backslash 3 P_{l}} T_{2}\left(f \chi_{9 Q_{0} \backslash 9 P_{l}}\right)\right)(x) \chi_{P_{l}}(x).
\end{aligned}
\end{equation*}
Then we have the following claim:
$$
\int_{\mathbb{R}^{n}} |G_{1}(x)|^p |g(x)| dx \lesssim\left(A_{0}^p+A_{1}^p\right)\|f\|^p_{L(\log L)^{\beta_{1}}, 9 Q_{0}} \langle|g|\rangle_{(\frac{r'}{p})', Q_{0}}\left|Q_{0}\right|.
$$
Indeed, noting that $0< p\leq 1$ and the definition of $G_1$ we have
\begin{equation*}
\begin{aligned}
|G_{1}(x)|^p\leq&\left|T_{1} T_{2}\left(f \chi_{9 Q_{0}}\right)(x)\right|^p \chi_{Q_{0} \backslash \cup_{l} P_{l}}(x)\\
&+\sum_{l} |T_{1}\left(\chi_{\mathbb{R}^{n} \backslash 3 P_{l}} T_{2}\left(f \chi_{9 Q_{0} \backslash 9 P_{l}}\right)\right)(x)|^p \chi_{P_{l}}(x)\\
&=:L_1(x)+\sum_lL^p_{2,l}(x),
\end{aligned}
\end{equation*}
where $L_{2,l}(x)=|T_{1}\left(\chi_{\mathbb{R}^{n} \backslash 3 P_{l}} T_{2}\left(f \chi_{9 Q_{0} \backslash 9 P_{l}}\right)\right)(x)| \chi_{P_{l}}(x)$ with $l=1,2,\ldots$.\\
Therefore,
\begin{equation*}
\begin{aligned}
\int_{\mathbb{R}^{n}} |G_{1}(x)|^p |g(x)| dx&\leq \int_{Q_{0} \backslash \cup_{l} P_{l}} L_1(x) |g(x)| dx+
\sum_l\int_{P_l}L_{2,l}^p(x)|g(x)|dx\\
&\leq\left(\int_{Q_{0} \backslash E}+\int_{E \backslash \cup_{l} P_{l}}\right)L_1(x) |g(x)| dx+
\sum_l\int_{P_l}L_{2,l}^p(x)|g(x)|dx\\
&=\int_{Q_{0} \backslash E}L_1(x) |g(x)| dx+\sum_l\int_{P_l}L_{2,l}^p(x)|g(x)|dx,
\end{aligned}
\end{equation*}
where the last inequality is due to the fact that $\left|E \backslash \bigcup_{l} P_{l}\right|=0.$\par
Note that for any $x\in Q_0 \backslash E $ yields that $x \notin E_1$ which implies that
$$\left|T_{1} T_{2}\left(f \chi_{9Q_{0}}\right)(x)\right|\leq D A_{0}\|f\|_{L(\log L)^{\beta_{1}}, 9 Q_{0}}.$$
This estimate combined with H\"{o}lder's inequality for any $t\geq 1$ yields
\begin{equation}\label{ie1.5}
\begin{aligned}
\int_{Q_{0} \backslash E}L_1(x) |g(x)| dx&\leq D^p A_{0}^p\|f\|^p_{L(\log L)^{\beta_{1}}, 9 Q_{0}}{\langle|g|\rangle}_{1,Q_0}|Q_0|\\
&\leq D^p A_{0}^p\|f\|^p_{L(\log L)^{\beta_{1}}, 9 Q_{0}}{\langle|g|\rangle}_{t,Q_0}|Q_0|.
\end{aligned}
\end{equation}
For each $l$, using the fact that $P_{l} \cap E^{c} \neq \emptyset,$ we observe that there exists some $x_l\in P_{l} \cap E^{c}$ such that
$$
\inf _{\xi \in P_{l}}\mathscr{M}_{T_{1} T_{2}, r^{\prime} ; Q_{0}}^{*} f(\xi)\leq \mathscr{M}_{T_{1} T_{2}, r^{\prime} ; Q_{0}}^{*} f(x_l).
$$
Combining this inequality and by the H\"{o}lder's inequality with $\frac{r'}{p}\geq r'>1$, it gives
\begin{equation}\label{ie1.6}
\begin{aligned}
\sum_l\int_{P_l}L_{2,l}^p(x)|g(x)|dx&\lesssim \sum_l\left(\int_{P_l}L_{2,l}^{r'}(x)dx\right)^{\frac{p}{r'}}\left(\int_{P_l}|g(x)|^{(\frac{r'}{p})'}dx\right)
^{\frac{1}{(\frac{r'}{p})'}}\\
&=\sum_l\left(\frac{1}{|P_l|}\int_{P_l}L_{2,l}^{r'}(x)dx\right)^{\frac{p}{r'}}|P_l|^{\frac{p}{r'}}
\left(\int_{P_l}|g(x)|^{(\frac{r'}{p})'}dx\right)^{\frac{1}{(\frac{r'}{p})'}}\\
&\leq \sum_l\left(\inf _{\xi \in P_{l}}\mathscr{M}_{T_{1} T_{2}, r^{\prime} ; Q_{0}}^{*} f(\xi)\right)^p
|P_l|^{\frac{p}{r'}}\left(\int_{P_l}|g(x)|^{(\frac{r'}{p})'}dx\right)^{\frac{1}{(\frac{r'}{p})'}}\\
&\leq D^p A_{1}^p\|f\|^p_{L(\log L)^{\beta_{1}}, 9 Q_{0}}\sum_l
|P_l|^{\frac{p}{r'}}\left(\int_{P_l}|g(x)|^{(\frac{r'}{p})'}dx\right)^{\frac{1}{(\frac{r'}{p})'}}\\
&\leq D^p A_{1}^p\|f\|^p_{L(\log L)^{\beta_{1}}, 9 Q_{0}}\left(\sum_l|P_l|\right)^{\frac{p}{r'}}
\left(\sum_l\int_{P_l}|g(x)|^{(\frac{r'}{p})'}dx\right)^{\frac{1}{(\frac{r'}{p})'}}\\
&\leq A_{1}^p\|f\|^p_{L(\log L)^{\beta_{1}}, 9 Q_{0}}\left|Q_0\right|^{1-\frac{1}{(\frac{r'}{p})'}}
\left(\int_{Q_0}|g(x)|^{(\frac{r'}{p})'}dx\right)^{\frac{1}{(\frac{r'}{p})'}}\\
&=A_{1}^p\|f\|^p_{L(\log L)^{\beta_{1}}, 9 Q_{0}}\langle|g|\rangle_{(\frac{r'}{p})', Q_{0}}\left|Q_{0}\right|.
\end{aligned}
\end{equation}
Combining this bounds with  (\ref{ie1.5}), we see that our claim holds with $t=(\frac{r'}{p})'$.\par

Let $G_2(x)$ be the function defined by
$$
G_{2}(x):=\sum_{l} T_{1}\left(\chi_{3 P_{l}} T_{2}\left(f \chi_{9Q_{0} \backslash 9 P_{l}}\right)\right)(x) \chi_{P_{l}}(x).
$$
For each function $g$, applying the H\"{o}lder's inequality and the $L^{r'}$ boundedness of $T_1$ yield
\begin{equation*}
\begin{aligned}
\int_{\mathbb{R}^{n}} |G_{2}(x)|^p |g(x)| dx
&\leq \sum_l\left(\int_{P_l}\left|T_{1}\left(\chi_{3 P_{l}} T_{2}\left(f \chi_{9 Q_{0} \backslash 9 P_{l}}\right)\right)(x)\right|^{r'}(x)dx\right)^{\frac{p}{r'}}\langle|g|\rangle_{(\frac{r'}{p})', P_{l}}\left|P_{l}\right|^{\frac{1}{(\frac{r'}{p})'}}\\
&\leq A^p \sum_{l}\left(\int_{3 P_{l}}\left|T_{2}\left(f \chi_{9 Q_{0} \backslash 9 P_{l}}\right)(x)\right|^{r^{\prime}} dx\right)^{\frac{p}{r^{\prime}}}\langle|g|\rangle_{(\frac{r'}{p})', P_{l}}\left|P_{l}\right|^{\frac{1}{(\frac{r'}{p})'}}\\
&\leq 3^n A^p \sum_{l}\left(\frac{1}{|3P_l|}\int_{3 P_{l}}\left|T_{2}\left(f \chi_{9 Q_{0} \backslash 9 P_{l}}\right)(x)\right|^{r^{\prime}} dx\right)^{\frac{p}{r^{\prime}}}\langle|g|\rangle_{(\frac{r'}{p})', P_{l}}\left|P_{l}\right|.\\
\end{aligned}
\end{equation*}
The same reasoning as what we have done for $G_1$ then gives
\begin{equation}\label{ie1.7}
\begin{aligned}
\int_{\mathbb{R}^{n}} |G_{2}(x)|^p |g(x)| dx
&\lesssim A^p\sum_l\left(\inf _{\xi \in P_{l}}\mathscr{M}_{T_{2}, r^{\prime} ; Q_{0}}^{*} f(\xi)\right)^p
|P_l|^{\frac{p}{r'}}\left(\int_{P_l}|g(x)|^{(\frac{r'}{p})'}dx\right)^{\frac{1}{(\frac{r'}{p})'}}\\
&\lesssim A^p A_{2}^p\|f\|^p_{L(\log L)^{\beta_{2}}, 9 Q_{0}}\sum_l
|P_l|^{\frac{p}{r'}}\left(\int_{P_l}|g(x)|^{(\frac{r'}{p})'}dx\right)^{\frac{1}{(\frac{r'}{p})'}}\\
&\leq A^p A_{2}^p\|f\|^p_{L(\log L)^{\beta_{2}}, 9 Q_{0}}\left(\sum_l|P_l|\right)^{\frac{p}{r'}}
\left(\sum_l\int_{P_l}|g(x)|^{(\frac{r'}{p})'}dx\right)^{\frac{1}{(\frac{r'}{p})'}}\\
&\leq A^pA_{2}^p\|f\|^p_{L(\log L)^{\beta_{2}}, 9 Q_{0}}\langle|g|\rangle_{(\frac{r'}{p})', Q_{0}}\left|Q_{0}\right|.
\end{aligned}
\end{equation}\par
We note that at each point $x\in \mathbb{R}^n,$
$$
T_{1} T_{2}\left(f \chi_{9 Q_{0}}\right)(x) \chi_{Q_{0}}(x)=G_{1}(x)+G_{2}(x)+\sum_{l} T_{1} T_{2}\left(f\chi_{9 P_{l}}\right)(x) \chi_{P_{l}}(x).
$$
Observe that the last term on the right-hand side is consistent with the form on the left-hand side, so we can iterate with $T_{1} T_{2}\left(f\chi_{9 P_{l}}\right)(x) \chi_{P_{l}}(x)$ instead of $T_{1} T_{2}\left(f\chi_{9 Q_{0}}\right)(x) \chi_{Q_{0}}(x),$ and so on. For fixed $j_{1}, \ldots, j_{m-1}\in \mathbb{Z}^+$, let $\{Q_{0}^{j_{1} \ldots j_{m-1} j_{m}}\}_{j_{m}}$ be the cubes obtained at the $m$-th stage of the decomposition process to the cube $Q_{0}^{j_{1} \ldots j_{m-1}}$, where $\{Q_{0}^{j_{1}}\}=\{P_{j}\}$ . For each fixed $j_{1} \ldots, j_{m}$, define the functions $G_{Q_{0}, 1}^{j_{1} \ldots j_{m}} f$ and $G_{Q_{0}, 2}^{j_{1} \ldots j_{m}} f$ by
$$
G_{Q_{0}, 1}^{j_{1} \ldots j_{m}} f(x)=T_{1}\left(\chi_{\mathbb{R}^{n} \backslash 3 Q_{0}^{j_{1} \ldots j_{m}}} T_{2}\left(f \chi_{9 Q_{0}^{j_{1} \ldots j_{m-1}} \backslash 9 Q_{0}^{j_{1} \ldots j_{m}}}\right)\right)(x) \chi_{Q_{0}^{j_{1} \ldots j_{m}}}(x)
$$
and
$$
G_{Q_{0}, 2}^{j_{1} \ldots j_{m}} f(x)=T_{1}\left(\chi_{3 Q_{0}^{j_{1} \ldots j_{m}}} T_{2}\left(f \chi_{9 Q_{0}^{j_{1} \ldots j_{m-1}} \backslash 9 Q_{0}^{j_{1} \ldots j_{m}}}\right)\right)(x) \chi_{Q_{0}^{j_{1} \ldots j_{m}}}(x),
$$
respectively. \\
Let $\mathcal{F}=\left\{Q_{0}\right\} \cup_{m=1}^{\infty} \cup_{j_{1}, \ldots, j_{m}}\left\{Q_{0}^{j_{1} \ldots j_{m}}\right\}$. It is easy to check that $\mathcal{F} \subset \mathcal{D}\left(Q_{0}\right)$ is a $\frac{1}{2}$-sparse family with $\sum_{l}\left|P_{l}\right| \leq \frac{1}{2}\left|Q_{0}\right|$. Then
\begin{equation}
\begin{aligned}
G_{Q_{0}, 1}(x)=T_{1} &T_{2}\left(f \chi_{9 Q_{0}}\right) \chi_{Q_{0} \backslash \cup_{j_{1}} Q_{0}^{j_{1}}}(x)\\
&+\sum_{m=1}^{\infty} \sum_{j_{1}, \ldots, j_{m}} T_{1} T_{2}\left(f \chi_{9 Q_{0}^{j_{1} \ldots j_{m}}}\right) \chi_{Q_{0}^{j_{1} \ldots j_{m}} \backslash \cup_{j_{m+1}} Q_{0}^{j_{1} \ldots j_{m+1}}}(x)\\
&+\sum_{m=1}^{\infty} \sum_{j_{1}, \ldots, j_{m}} G_{Q_{0}, 1}^{j_{1} \ldots j_{m}} f(x) \chi_{Q_{0}^{j_{1} \ldots j_{m}}}(x) .
\end{aligned}
\end{equation}
Similar as what we have done for $G_2$, we may define function $G_{Q_{0}, 2}$ by
$$
G_{Q_{0}, 2}(x)=\sum_{m=1}^{\infty} \sum_{j_{1} \ldots j_{m}} G_{Q_{0}, 2}^{j_{1} \ldots j_{m}} f(x) \chi_{Q_{0}^{j_{1} \ldots j_{m}}}(x).
$$
Then for a. e. $x \in \mathbb{R}^n$,
$$
T_{1} T_{2}\left(f \chi_{9 Q_{0}}\right)(x)\chi_{Q_{0}}(x)=G_{Q_{0}, 1}(x)+G_{Q_{0}, 2}(x) .
$$\par
We are now ready to combine all our ingredients to finish the proof. In fact, by applying (\ref{ie1.6}), (\ref{ie1.7}) and (\ref{ie1.5}) with $t=(\frac{r'}{p})',$  we obtain
\begin{equation}\label{ie1.8}
  \int_{\mathbb{R}^{n}} |G_{Q_{0}, 1}(x)|^p |g(x)| dx \lesssim\left(A_{0}^p+A_{1}^p\right)\sum_{Q\in \mathcal{F}}\|f\|^p_{L(\log L)^{\beta_{1}}, 9 Q} \langle|g|\rangle_{(\frac{r'}{p})', Q}\left|Q\right|,
\end{equation}
\begin{equation}\label{ie1.9}
  \int_{\mathbb{R}^{n}} |G_{Q_{0}, 2}(x)|^p |g(x)| dx \lesssim\left(A^pA_{2}^p\right)\sum_{Q\in \mathcal{F}}\|f\|^p_{L(\log L)^{\beta_{2}}, 9 Q} \langle|g|\rangle_{(\frac{r'}{p})', Q}\left|Q\right|.
\end{equation}
Observe that $\bigcup_{l} Q_{l}=\mathbb{R}^{n}$, where the cubes $Q_l$'s have disjoint interiors and $\supp f \subset 9 Q_{j}$ for each $l$. To see this, we begin by taking a cube $Q_{0}$ such that $\supp f \subset Q_{0}$. And cover $9 Q_{0} \backslash Q_{0}$ by $9^{n}-1$ congruent cubes $Q_{l}$. For every $l, Q_{0} \subset 9 Q_{l}$. We continue to do the same way for $27 Q_{0} \backslash 9 Q_{0}$ and so on. It's easy to check that the union of the cubes $Q_{l}$ of this process, including $Q_{0}$, satisfies our requirement. Applying to each $Q_{l}$, we have that the above estimates (\ref{ie1.8}) and (\ref{ie1.9}) hold for $\mathcal{F}_l.$ Moreover, for a.e. $x \in \mathbb{R}^n$,
\begin{equation}\label{ie1.10}
T_{1} T_{2} f(x)=\sum_{l} G_{Q_{l}, 1} f(x)+\sum_{l} G_{Q_{l}, 2} f(x)=:H_{1} f(x)+H_{2} f(x).
\end{equation}
Let $\mathcal{S}$ denote $\left\{9Q: Q \in \bigcup_{l}\mathcal{F}_l\right\}.$ From the definitions of $\mathcal{F}_l$, it is easy to check that $\mathcal{S}$ is a $\frac{1}{2\cdot9^{n}}$-sparse family. This fact, together with (\ref{ie1.10}) gives us the desired results.
\end{proof}

Applying Lemma $\ref{lem2.1}$ to the rough singular integral operators $T_{\Omega_{1}}$ and $T_{\Omega_{2}}$, we can complete the proof of Theorem \ref{thm1.2}.
\begin{proof}[Proof of Theorem $\ref{thm1.2}$]
Let $T_{1}, T_{2}$ be two linear operators satisfying the conditions in Lemma \ref{lem2.1}. Note that $\X$ is a RIQBFS with $p$-convex, which implies that $\mathbb{Y}=\X ^{\frac{1}{p}}$ is a RIBFS. Then Lemma \ref{lem2.1} tells us that
 for a.e.$ x\in \R^n,$ $ T_1T_2f(x)=H_1(x)+H_2(x).$ Therefore,
\begin{equation}\label{ie2.1}
\begin{aligned}
\left\| T_1T_2f\right\|_{\X(w)} &\lesssim \left\| H_1 \right\|_{\X(w)}+ \left\| H_2 \right\|_{\X(w)} \\
&\backsimeq \sum_{i=1}^{2}\sup_{\left\|g\right\|_{\Y'(w)}\leq 1} \left( \int_{\R^n}|H_i(x)|^pg(x)w(x)dx \right)^{\frac{1}{p}}\\
&=: \sup_{\left\|g\right\|_{\Y'(w)}\leq 1} \left(\mathcal{L}_1(g) \right)^{\frac{1}{p}} +\sup_{\left\|g\right\|_{\Y'(w)}\leq 1} \left(\mathcal{L}_2(g) \right)^{\frac{1}{p}},
\end{aligned}
\end{equation}
where $\mathcal{L}_i(g)=\int_{\R^n}|H_i(x)|^pg(x)w(x)dx,i=1,2.$\par
Consider first the estimate of $\mathcal{L}_1(g).$ By Lemma \ref{lem2.1}, we have
\begin{equation*}
\begin{aligned}
\mathcal{L}_1(g) &\lesssim (A_0^p+A_1^p) \mathcal{A}^{(p,1)}_{\mathcal{S} ; L(\log L)^{\beta_{1}}, L^{(r^{'}/p)^{'}}}(f, gw)\\
&=(A_0^p+A_1^p)\sum_{Q \in \mathcal{S}} \left\|f\right\|_{L(\log L)^{\beta_1},Q}^p \langle|gw|\rangle_{(\frac{r'}{p})',Q}|Q|.
\end{aligned}
\end{equation*}

Observe that $(\frac{r'}{p})'\leq r $ for $0<p\leq 1,$ and then a direct calculation shows that
\begin{equation*}
|Q| \langle|gw|\rangle_{(\frac{r'}{p})',Q} \leq  \langle|gw|\rangle_{r,Q} |Q| \leq w(Q)\cdot g_{Q,w}^{rs},
\end{equation*}
where the definitions of $s$ and $g_{Q,w}^{rs}$ are as the same as in the proof of Theorem \ref{thm1.1}.\\
Hence, by the Carleson embedding theorem,
\begin{equation*}
\begin{aligned}
\mathcal{L}_1(g) &\lesssim (A_0^p+A_1^p) \sum_{Q \in \mathcal{S}} \left\|f\right\|_{L(\log L)^{\beta_1},Q}^p w(Q)g_{Q,w}^{rs} \\
&\lesssim (A_0^p+A_1^p) [w]_{A_\infty}  \int_{\R^n} \left( M_{L(\log L)^{\beta_1}}f(x) \right)^p M_{w,2s}^{\mathcal{D}}g(x)w(x)dx \\
&\lesssim (A_0^p+A_1^p) [w]_{A_\infty}  \int_{\R^n} \left( M^{\beta_1+1}f(x) \right)^p M_{w,2s}^{\mathcal{D}}g(x)w(x)dx,
\end{aligned}
\end{equation*}
which, together with the generalized H\"{o}lder's inequality, implies that

\begin{equation*}
\begin{aligned}
\mathcal{L}_1(g) &\lesssim (A_0^p+A_1^p) [w]_{A_\infty} \left\|( M^{\beta_1+1}f)^p \right\|_{\Y(w)} \left\| M_{w,2s}^{\mathcal{D}}g\right\|_{\Y'(w)} \\
&=(A_0^p+A_1^p) [w]_{A_\infty} \left\| M^{\beta_1+1}f \right\|_{\X(w)}^p \left\| M_w^{\mathcal{D}}(g^{2s})\right\|_{\Y'^{\frac{1}{2s}}(w)}^{\frac{1}{2s}}\\
&\lesssim (A_0^p+A_1^p) [w]_{A_\infty}[w]_{A_{p_\X}}^{\frac{\beta_1+1}{p_\X}p}\left\|f \right\|^p_{\X(w)},
\end{aligned}
\end{equation*}
where the last inequality follows from Lemma \ref{lem1.3} and the fact $p_{\Y'^{\frac{1}{2s}}}=\frac{p_{\Y'}}{2s}=\frac{1}{2s}\frac{q_{\X}}{q_{\X}-p}>1.$\\
Therefore
\begin{equation}\label{ie2.2}
 \left\| H_1 \right\|_{\X(w)} \lesssim (A_0^p+A_1^p)^{\frac{1}{p}}[w]_{A_{\infty}}^{\frac{1}{p}}[w]_{A_{p_\X}}^{\frac{\beta_1+1}{p_\X}}\left\|f \right\|_{\X(w)}.
\end{equation}
Now we turn to the proof of $\left\| H_2 \right\|_{\X(w)}.$ The same reasoning as what we have done for $\mathcal{L}_1(g)$ yields that
$$\mathcal{L}_2(g)=\int_{\R^n} |H_2(x)|^p|g(x)|w(x)dx \lesssim A^pA_2^p[w]_{A_\infty} [w]_{A_{p_\X}}^{\frac{\beta_2+1}{p_\X}p}\left\|f \right\|_{\X(w)}^p\left\|g \right\|_{\Y'(w)}.$$
By taking the supermum over $\|g\|_{\Y'(w)}\leq 1,$ we obtain
\begin{equation}\label{ie2.3}
\left\| H_2 \right\|_{\X(w)} \lesssim A A_2 [w]_{A_\infty}^{\frac{1}{p}}  [w]_{A_{p_\X}}^{\frac{\beta_2+1}{p_\X}}\left\|f \right\|_{\X(w)}.
\end{equation}
Combining the above estimates (\ref{ie2.1}), (\ref{ie2.2}) and (\ref{ie2.3}), we conclude that
$$\left\| T_1T_2f \right\|_{\X(w)}\lesssim \left[(A_0^p+A_1^p)^{\frac{1}{p}}+AA_2\right][w]_{A_{\infty}}^{\frac{1}{p}}\left([w]_{A_{p_\X}}^{\frac{\beta_1+1}{p_\X}}
+[w]_{A_{p_\X}}^{\frac{\beta_2+1}{p_\X}}\right)\left\|f \right\|_{\X(w)}.$$
Finally, we set $T_1=T_{\Omega_1},T_2=T_{\Omega_2},A_0=1,A_1=A=A_2=r',\beta_1=1,\beta_2=0.$ This together with the proof of Corollary 5.1 in \cite{hu2} yields
$$\left\|T_{\Omega_1}T_{\Omega_2}f\right\|_{\X(w)}\lesssim \left([w]_{A_\infty}^{1+\frac{1}{p}}+[w]_{A_\infty}^{2+\frac{1}{p}}\right)  \left([w]_{A_{p_\X}}^{\frac{1}{p_\X}}+[w]_{A_{p_\X}}^{\frac{2}{p_\X }}\right)\left\|f \right\|_{\X(w)}. $$
This finishes the proof of Theorem \ref{thm1.2}.
\end{proof}
\section{ Applications}\label {Sect 5}
This section will be devoted to give an application of Theorem \ref{thm1.1}. We consider the boundedness of certain non-standard Calder\'{o}n-Zygmund operators with rough kernels in RIBFS $\X$. For fixed $n\geq 2$, let $\Omega$ be a function with homogeneous of degree zero, integrable on the unit sphere $\mathbb{S}^{n-1}$ and satisfy the vanishing moment condition that for all $1 \leq j \leq n$,
\begin{equation}\label{e.2}
\int_{\mathbb{S}^{n-1}} \Omega\left(x\right) x_{j} d\sigma(x)=0,
\end{equation}
where $x_{j}(1 \leq j \leq n)$ denote the $j$-th variable of $x\in \mathbb{R}^{n}$.
Note that the vanishing condition here is different from (\ref{e.1}). \par
Let $A$ be a function on $\mathbb{R}^{n}$ whose derivatives of order one in $\mathrm{BMO}\left(\mathbb{R}^{n}\right),$ namely, $\nabla A \in \mathrm{BMO}$. Then we can define the non-standard rough Calder\'{o}n-Zygmund operator $T_{\Omega, A }$ by
\begin{equation}
T_{\Omega, A} f(x)=\text {p.v.} \int_{\mathbb{R}^{n}} \frac{\Omega(x-y)}{|x-y|^{n+1}}\left(A(x)-A(y)-\nabla A(y)\cdot(x-y)\right) f(y) d y.
\end{equation}
The dual operator of $T_{\Omega, A}$ has the following form
$$
\widetilde{T}_{\Omega, A} f(x)=\text {p.v.} \int_{\mathbb{R}^{n}} \frac{\Omega(x-y)}{|x-y|^{n+1}}(A(x)-A(y)-\nabla A(x)\cdot(x-y)) f(y) d y.
$$
The operator ${T}_{\Omega, A}$ is closely related to the Calder\'{o}n commutator, of interest in PDE, and was first studied by Cohen \cite{coh}. An interesting aspect of this operator is that it may not satisfy the classical standard kernel condition, even if the kernel $\Omega$ is a smooth kernel. This is also the main reason why people call it the non-standard singular integral operator. We refer the reader to \cite{coh,hu4, hu5} and their references for more details on this topic.
It is worth mentioning that Hu et al. \cite{hu6} recently got the endpoint $L \log L$ type estimate and the $L^p$ boundedness of ${T}_{\Omega, A}$ with $\Omega \in L(\log L)^{2}(\mathbb {S}^{n-1}).$ In addition, they also obtained the following results:

\begin{thC}[\cite{hu6}] Let $\Omega \in L^{\infty}\left(\mathbb{S}^{n-1}\right)$ be homogeneous of degree zero, satisfy the vanishing condition (\ref{e.2}), and $A$ be a function on $\mathbb{R}^{n}$ with derivatives of order one in $\mathrm{BMO}\left(\mathbb{R}^{n}\right)$. Then for $p \in(1, \infty)$ and $w \in A_{p}\left(\mathbb{R}^{n}\right)$, the following weighted norm inequality holds
$$
\left\|T_{\Omega, A} f\right\|_{L^{p}\left(\mathbb{R}^{n}, w\right)} \lesssim[w]_{A_{p}}^{\frac{1}{p}}\left([w]_{A_{\infty}}^{\frac{1}{p}}+[\sigma]_{A_{\infty}}^{\frac{1}{p}}\right)
[\sigma]_{A_{\infty}} \min \left\{[\sigma]_{A_{\infty}},[w]_{A_{\infty}}\right\}\|f\|_{L^{p}\left(\mathbb{R}^{n}, w\right)}.
$$
\end{thC}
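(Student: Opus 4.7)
My plan is to establish a bilinear sparse domination for $T_{\Omega, A}$ of a form analogous to Lemma~\ref{lem1.2}, and then run the weighted Carleson-embedding machinery used in the proof of Theorem~\ref{thm1.1}. The starting point is a Taylor-type rewriting of the kernel: from
\[
A(x)-A(y)-\nabla A(y)\cdot(x-y) = \bigl(\nabla A(x)-\nabla A(y)\bigr)\cdot(x-y) - \bigl[A(x)-A(y)-\nabla A(x)\cdot(x-y)\bigr],
\]
one obtains the decomposition $T_{\Omega,A} = -\widetilde{T}_{\Omega,A} + \sum_{j=1}^{n}[A_j, T_{\Omega_j}]$, where $A_j=\partial_j A\in\mathrm{BMO}$, $\Omega_j(y')=\Omega(y')y'_j$, and $T_{\Omega_j}$ is the rough convolution singular integral with kernel $\Omega_j(y/|y|)/|y|^n$. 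The extended vanishing condition (\ref{e.2}) guarantees that each $\Omega_j$ has mean zero on $\mathbb{S}^{n-1}$, and $\Omega\in L^\infty$ gives $\Omega_j\in L^\infty(\mathbb{S}^{n-1})$, so each $T_{\Omega_j}$ is a rough singular integral of precisely the kind treated in Theorem~A.

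Next, for each commutator $[A_j, T_{\Omega_j}]$, I would derive a bilinear sparse domination by adapting Lerner's grand-maximal-operator scheme that underlies Lemma~\ref{lem2.1}. Concretely, for $r\in(1,3/2]$ and $f$ bounded with compact support, I would produce a $\tfrac{1}{2\cdot 9^n}$-sparse family $\mathcal{S}$ and a splitting $[A_j,T_{\Omega_j}] f = K_1+K_2$ such that
\[
\Bigl|\int_{\mathbb{R}^n} K_1(x) g(x)\,dx\Bigr| \lesssim r'\, \mathcal{A}_{\mathcal{S};L(\log L)^2, L^r}(f,g),\qquad
\Bigl|\int_{\mathbb{R}^n} K_2(x) g(x)\,dx\Bigr| \lesssim (r')^2\, \mathcal{A}_{\mathcal{S};L\log L, L^r}(f,g),
\]
with $\|A_j\|_{\mathrm{BMO}}$ absorbed into $\lesssim$. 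Heuristically, the $\mathrm{BMO}$ oscillation of $A_j$ bumps the Orlicz norm on the $f$-side by one extra power of $\log$ compared with Lemma~\ref{lem1.2}. The dual term $\widetilde{T}_{\Omega,A}$ has the same structure after interchanging $x$ and $y$, and so by duality it satisfies the analogous sparse form with the roles of $f$ and $g$ reversed; this is what is going to produce the $[\sigma]_{A_\infty}^{1/p}$ alternative in the final weight bracket.

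Once the sparse forms are in hand, the weighted estimate follows from repeating the principal-cubes / Carleson-embedding computation carried out in the proof of Theorem~\ref{thm1.1}. For $w\in A_p$ with $\sigma=w^{1-p'}$, applying H\"older, splitting $\langle |gw|\rangle_{r,Q}$ via the reverse H\"older inequality of Lemma~\ref{lem1.1}, and summing over principal cubes $\mathcal{B}$ (picking up a factor $[w]_{A_\infty}$ per iteration) yields bounds of the type
\[
\bigl\|\mathcal{A}_{\mathcal{S};L(\log L)^k,L^r}(f,\cdot)\bigr\|_{L^p(w)\to L^p(w)} \lesssim [w]_{A_p}^{1/p}\,[w]_{A_\infty}^{1/p}\,[\sigma]_{A_\infty}^{\,k}\,\min\{[w]_{A_\infty},[\sigma]_{A_\infty}\}
\]
for $k\in\{1,2\}$. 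Assembling the contributions of the two pieces in the decomposition of $T_{\Omega,A}$ — with the dual half supplying the $[\sigma]_{A_\infty}^{1/p}$ alternative in the outer bracket through the analogous computation on the $L^{p'}(\sigma)$ side — produces exactly the bound asserted in Theorem~C.

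The main obstacle is the first step: establishing the bilinear sparse domination for $[A_j, T_{\Omega_j}]$ with the sharp $(r')^k$ behavior and Zygmund-type Orlicz averages on the $f$-side. This rests on a weak-type $L\log L$ endpoint for the commutator itself and for its grand-maximal truncation $\mathscr{M}_{[A_j,T_{\Omega_j}],r'}$, which is precisely where the roughness of $\Omega$ and the $\mathrm{BMO}$-oscillation of $A_j$ interact most delicately; one must run a John–Nirenberg-based argument while keeping control of the rough kernel through the $L^{r'}$ boundedness of $T_{\Omega_j}$. Once these endpoint bounds are available, the iteration that produces the sparse family, and the subsequent weighted estimate, are essentially routine adaptations of what is already carried out in Sections~\ref{Sect 3} and~\ref{Sect 4}.
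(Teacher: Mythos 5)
The paper does not actually prove Theorem C; it is quoted verbatim from \cite{hu6}, and Section~5 only uses the corresponding bilinear sparse domination from \cite{hu6} (their Theorems 4.11 and 5.6) as a black box to deduce the RIBFS version, Theorem~\ref{thm4.1}. So your proposal should be judged on its own merits rather than against an in-paper argument.

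The central step of your proposal has a genuine gap. The Taylor rewriting gives the identity $T_{\Omega,A}=\widetilde{T}_{\Omega,A}+\sum_{j}[\partial_j A,\,T_{\Omega_j}]$ (your version carries a sign error on $\widetilde{T}_{\Omega,A}$, but that is cosmetic). The problem is that this identity does not reduce the difficulty: $\widetilde{T}_{\Omega,A}$ is, up to replacing $\Omega$ by $\Omega(-\cdot)$, precisely the $L^2$-adjoint of $T_{\Omega,A}$, so a quantitative $L^p(w)$ bound for $\widetilde{T}_{\Omega,A}$ is exactly as hard as the target estimate for $T_{\Omega,A}$ (they are equivalent by duality with the pair $(L^p(w),L^{p'}(\sigma))$). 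The commutator terms control only the antisymmetric part $T_{\Omega,A}-\widetilde{T}_{\Omega,A}$; you give no independent route to either $T_{\Omega,A}$ or $\widetilde{T}_{\Omega,A}$ alone, and appealing to ``duality'' for $\widetilde{T}_{\Omega,A}$ is circular since you would need the very sparse bound you are trying to prove. What is actually needed (and what \cite{hu6} does, as the paper's citation of their Theorems 4.11 and 5.6 indicates) is a \emph{direct} bilinear sparse domination for $T_{\Omega,A}$ itself, in the spirit of Lemma~\ref{lem1.2}: one controls the Taylor remainder $A(x)-A(y)-\nabla A(y)\cdot(x-y)$ locally by the $\mathrm{BMO}$ oscillation of $\nabla A$ (a Cohen-type pointwise lemma), which is what produces the $L(\log L)$ or $L(\log L)^2$ bump on the $f$-side, together with weak-type endpoint bounds for $T_{\Omega,A}$ and for the grand maximal truncation $\mathscr{M}_{T_{\Omega,A},r'}$ to run Lerner's iteration. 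Once that sparse form is in hand, your second half (Carleson embedding over principal cubes, reverse H\"older via Lemma~\ref{lem1.1}, symmetrization over $w$ and $\sigma$) is the right mechanism and matches how the paper derives Theorem~\ref{thm4.1}.
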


As in \cite[Theorem 4.11]{hu6} and \cite[Theorem 5.6]{hu6}, by Theorem \ref{thm1.1}, we obtain
\begin{theorem}\label{thm4.1}
Let $\Omega$ be homogeneous of degree zero, have the vanishing moment (\ref{e.2}) and $\Omega\in L^{\infty}\left(\mathbb{S}^{n-1}\right)$, and $A$ be a function on $\mathbb{R}^{n}$ with derivatives of order one in $\mathrm{BMO}\left(\mathbb{R}^{n}\right)$. Let $1<r<\infty$ and $\X$ be a RIBFS with $1 <p_{\X}\leq q_{\X}< \infty$, then there exist $q>1$ such that
\begin{equation*}\left\|T_{\Omega, A} f\right\|_{\X(w)}\lesssim\left\{\begin{array}{ll}
[w]_{A_\infty}  [w]_{A_{p_\X/r}}^{\frac{1}{rq}} \left\| f \right\|_{\X(w)}, &\text{ if } r<p_\X\leq q_\X<\infty, w\in A_{\frac{p_\X}{r}}; \\
{[w]_{A_\infty}^2}[w]_{A_{p_\X}}^{\frac{2}{p_\X}}
\left\|f\right\|_{\X(w)},& \text{ if } 1<p_\X\leq q_\X<2-\frac{1}{1+p_\X}, w\in A_{p_\X}.
\end{array}\right.
\end{equation*}
\end{theorem}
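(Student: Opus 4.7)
The plan is to mimic the proof of Theorem \ref{thm1.1} closely, replacing the bilinear sparse domination of the composition $T_{\Omega_1}T_{\Omega_2}$ from Lemma \ref{lem1.2} by the corresponding bilinear sparse domination of $T_{\Omega,A}$ that is a by-product of the arguments in \cite[Theorems 4.11 and 5.6]{hu6}. Concretely, for every $r\in(1,3/2]$ and every bounded $f$ with compact support, those results yield a $\frac{1}{2\cdot 9^{n}}$-sparse family $\S$ of dyadic cubes together with a sparse testing estimate of the form
\begin{equation*}
\Bigl|\int_{\R^{n}} (T_{\Omega,A}f)(x)\,g(x)\,dx\Bigr|\lesssim \mathcal{A}_{\S;L(\log L),L^{r}}(f,g)
\end{equation*}
(the implicit constant depending on $\|\Omega\|_{L^{\infty}(\sn)}$ and $\|\nabla A\|_{\mathrm{BMO}}$), plus a complementary scalar $L^{r}$-sparse bound in the spirit of \cite[Corollary 3.4]{ler1}. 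The BMO oscillation inherent in $A(x)-A(y)-\nabla A(y)\cdot(x-y)$ is absorbed into the $L(\log L)$ norm on the $f$-side via the John--Nirenberg inequality; this is the mechanism that upgrades an $L^{1}$ testing on $f$ to an $L(\log L)$ testing and yields a single---rather than double---sparse form.

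Granted this, the proof splits along the two ranges. For the second case, $1<p_{\X}\le q_{\X}<2-\frac{1}{1+p_{\X}}$ with $w\in A_{p_{\X}}$, I would use Lorentz--Luxemburg duality to write $\|T_{\Omega,A}f\|_{\X(w)}=\sup_{\|g\|_{\X'(w)}\le 1}\bigl|\int T_{\Omega,A}f\,g\,w\,dx\bigr|$ and insert the bilinear sparse bound. The $J_1$-piece estimate of Section \ref{Sect 3} then carries over verbatim: convert $\langle|gw|\rangle_{r,Q}|Q|$ into $w(Q)\,g^{rs}_{Q,w}$ through the reverse Hölder inequality of Lemma \ref{lem1.1} with the same choice of $r,s$; group cubes via the principal-cube construction to pay one factor of $[w]_{A_{\infty}}$; and conclude by the generalized Hölder inequality together with the $\X(w)$-bound for $M_{L(\log L)}\simeq M^{2}$ (Lemma \ref{lem1.3}) and for $M^{\mathcal D}_{w}$ on the associate space $(\X')^{1/(2s)}(w)$. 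This produces exactly the stated bound $[w]_{A_{\infty}}^{2}[w]_{A_{p_{\X}}}^{2/p_{\X}}$. For the first range, $r<p_{\X}\le q_{\X}<\infty$ and $w\in A_{p_{\X}/r}$, I would instead invoke the scalar $L^{r}$-sparse bound: the same duality scheme reduces matters to the $\X(w)$-boundedness of $M_{r}$, which by \cite[Lemma 3.1]{tan} is controlled by $[w]_{A_{p_{\X}/r}}^{1/(rq)}$; combining with one principal-cube factor $[w]_{A_{\infty}}$ yields the first claimed inequality.

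The genuine obstacle is not the $\X(w)$ bookkeeping---which is an exact transcription of the calculus developed in Section \ref{Sect 3}---but rather confirming that the bilinear sparse domination of $T_{\Omega,A}$ takes precisely the $L(\log L)\times L^{r}$ form, with the proper dependence on $\|\nabla A\|_{\mathrm{BMO}}$. Technically this requires dominating the grand maximal truncated operator $\mathscr{M}_{T_{\Omega,A},r'}$ and its iterated variant $\mathscr{M}^{*}_{T_{\Omega,A},r'}$ pointwise by an $L(\log L)$ Orlicz average of $f$ outside the cube, which in turn hinges on the $L\log L \to L^{1,\infty}$ endpoint estimate for $T_{\Omega,A}$ established in \cite{hu6}. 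Once this input is granted, the rearrangement-invariant calculus of Theorem \ref{thm1.1} carries over without substantive change, and the two stated weighted estimates follow.
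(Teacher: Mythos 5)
Your proposal is correct and follows essentially the same route as the paper, which offers no detailed argument of its own but simply cites the sparse domination results for $T_{\Omega,A}$ in \cite{hu6} together with the calculus developed in the proof of Theorem \ref{thm1.1}. You have filled in the mechanism the paper leaves implicit, and your identification of the two sparse inputs (a single $(L(\log L),L^{r})$-bilinear sparse form, matching the $I_1$-piece of Section \ref{Sect 3} and producing $[w]_{A_\infty}^2[w]_{A_{p_\X}}^{2/p_\X}$, and a scalar $L^{r}$-sparse bound for the range $r<p_\X$, producing $[w]_{A_\infty}[w]_{A_{p_\X/r}}^{1/(rq)}$) aligns exactly with the stated estimates.
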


\end{document}